\author{Paul \textsc{Poncet}}
\address{CMAP, \'{E}cole Polytechnique, Route de Saclay, 91128 Palaiseau Cedex, France \\
and INRIA, Saclay--\^{I}le-de-France}
\email{poncet@cmap.polytechnique.fr}
\def\twoheaddownarrow{\rlap{$\downarrow$}\raise-.5ex\hbox{$\downarrow$}}%%
\def\twoheaduparrow{\rlap{$\uparrow$}\raise.5ex\hbox{$\uparrow$}}%%
\newcommand{\call}{\mathrsfs}
\newcommand{\reels}{\mathbb R}
\newcommand{\ssup}{\gg}
\newtheorem{theorem}{Theorem}[section]
\newtheorem{corollary}[theorem]{Corollary}
\newtheorem{proposition}[theorem]{Proposition}
\theoremstyle{definition}
\newtheorem{definition}[theorem]{Definition}
\newtheorem{remark}[theorem]{Remark}
\newenvironment{acknowledgements}[1][]{\par\vspace{0.5cm}\noindent\textbf{Acknowledgements#1.} }{\par}
\begin{document}

\title{A Decomposition Theorem for Maxitive Measures}
\date{Decembre 28, 2009}

\subjclass{Primary 28B15, 28C15; Secondary 06B35, 03E72, 49J52}

\keywords{max-plus algebra, maxitive measures, idempotent measures, capacities, continuous posets, domains, continuous lattices}

\begin{abstract}
A maxitive measure is the analogue of a finitely additive measure or charge, in which the usual addition is replaced by the supremum operation. Contrarily to charges, maxitive measures often have a density. We show that maxitive measures can be decomposed as the supremum of a maxitive measure with density, and a residual maxitive measure that is null on compact sets under specific conditions.  
\end{abstract}

\maketitle

\vspace{-.7cm}

\section{Introduction}

In the area of idempotent analysis, maxitive measures are usually known as \textit{idempotent measures} after Maslov \cite{Maslov87}. 
Maxitive measures are defined analogously to finitely additive measures with the supremum operation $\oplus$ in place of the addition $+$. 
In the literature, they first appeared in an article by Shilkret \cite{Shilkret71}, and then have been rediscovered and explored for the purpose of capacity theory and large deviations (e.g.\ Norberg \cite{Norberg86}, O'Brien and Vervaat \cite{OBrien91}, Gerritse \cite{Gerritse96}, Puhalskii \cite{Puhalskii01}), idempotent analysis and max-plus (tropical) algebra (e.g.\ Maslov \cite{Maslov87}, Bellalouna \cite{Bellalouna92}, Akian et al.\ \cite{Akian94b}, Del Moral and Doisy \cite{delMoral98}, Akian \cite{Akian99}), fuzzy set theory (e.g.\ Zadeh \cite{Zadeh78}, Sugeno and Murofushi \cite{Sugeno87}, Pap \cite{Pap95}, De Cooman \cite{DeCooman97}, Nguyen et al.\ \cite{Nguyen03}, Poncet \cite{Poncet07}),  optimisation (e.g.\ Barron et al.\ \cite{Barron00}, Acerbi et al.\  \cite{Acerbi02}), or fractal geometry (Falconer \cite{Falconer90}). 

%Section~\ref{domains} gives basic concepts of domain theory. 
%Section~\ref{secmax} defines the notion of maxitive measures and gives an extension theorem.  
%These maxitive measures take values in a partially ordered set which may not be a complete lattice, so when continuity assumptions on this partially ordered set are required, we use the tools given in Section~\ref{domains}. 
%This section also revisits the problem of finding necessary and sufficient conditions for a maxitive measure to have a cardinal density. 
%Section~\ref{decompose} states a decomposition theorem for maxitive measures, with a regular part that has a cardinal density, and a singular part whose properties are discussed. 
%In Section~\ref{conclusion} we conclude and mention possible future work. 

%Section~\ref{domains}

Let $E$ be a nonempty set. A \textit{prepaving} on $E$ is a collection of subsets of $E$ containing the empty set and closed under finite unions. %, covering $E$, and such that, for all $x \in E$, $\{ G \in \call{E} : G \ni x \}$ is nonempty filtered in $\call{E}$ (ordered by inclusion). %and finite intersections, and covering $E$. 
Assume in all the sequel that $\call{E}$ is a prepaving on $E$ and that $L$ is a partially ordered set or \textit{poset} wih a bottom element, that we denote by $0$. An $L$-valued \textit{maxitive measure} (resp.\ \textit{completely maxitive measure}) on $\call{E}$ is a map $\nu : \call{E} \rightarrow L$ such that $\nu(\emptyset) = 0$ and, for every finite (resp.\ arbitrary) family $\{G_j\}_{j\in J}$ of elements of $\call{E}$ such that $\bigcup_{j \in J} G_j \in \call{E}$, the supremum of $\{ \nu(G_j) : j\in J \}$ exists and satisfies 
$$
\nu(\bigcup_{j\in J} G_j) = \bigoplus_{j \in J} \nu(G_j). 
$$

If we take for $\call{E}$ the prepaving of all finite subsets of $E$, then every maxitive measure $\nu$ on $\call{E}$ can be written as 
\begin{equation}\label{eqn:den}
\nu(G) = \bigoplus_{x \in G} c^*(x), 
\end{equation}
where $c^*(x) = \nu(\{ x \})$, since $G = \bigcup_{x\in G} \{ x \}$, where the union runs over a finite set. We say that $c^*$ is a \textit{cardinal density} (or a \textit{density} for short) of $\nu$ when Equation~(\ref{eqn:den}) is satisfied. With this simple example, where $E$ need not to be finite for $\nu$ to have a density, we see why compelling $E \in \call{E}$ would be inappropriate.  

In the general case, singletons $\{ x \}$ do not necessarily belong to $\call{E}$, but, as we shall see, one to extend maxitive measures to the whole power set $2^E$ under mild conditions, so it is tempting to consider $c^*(x) := \nu^*(\{x\})$ instead, where $\nu^*$ is the extension of $\nu$ defined as in Equation~(\ref{extension}) below. 
This idea, which appeared in \cite{Heckmann98b, Heckmann98} and \cite{Akian99}, will indeed lead us to necessary and sufficient conditions for a maxitive measure to have a density (see Theorem~\ref{general}). 	

In this article, we are interested in decomposing a maxitive measure into a \textit{regular} part, which is a maxitive measure with a cardinal density, and a \textit{residual} part, also maxitive, and null on compact sets under specific conditions. 
Our motivation comes from possible applications to Radon-Nikod\'ym like theorems for the \textit{Shilkret integral}, developed in \cite{Shilkret71} for maxitive measures, also known as Maslov's \textit{idempotent integral}. 

The results we shall give on maxitive measures are stated in the general case where these measures take their values in a \textit{domain}, the definition of which follows. (For more background on domain theory, see the monograph by Gierz et al.\ \cite{Gierz03}.) 
%A \textit{poset} or \textit{partially ordered set} $(P,\leqslant)$ is a set $P$ equipped with a reflexive, antisymmetric and transitive relation $\leqslant$. We only consider posets with a bottom element $0$. 
A subset $F$ of a poset $(P,\leqslant)$ is \textit{filtered} if, for all $x, y \in F$, one can find $z \in F$ such that $z \leqslant x$ and $z \leqslant y$. A \textit{filter} of $P$ is a nonempty filtered subset $F$ of $P$ such that $F = \{ y \in P : \exists x \in F, x \leqslant y \}$. 
We say that $y \in P$ is \textit{way-above} $x\in P$, written $y \ssup x$, if, for every filter $F$ with an infimum $\bigwedge F$, $x \geqslant \bigwedge F$ implies $y \in F$. 
The \textit{way-above relation}, useful for studying lattice-valued upper semicontinuous functions (see Gerritse \cite{Gerritse97} and Jonasson \cite{Jonasson98}), is dual to the usual \textit{way-below relation}, but is more appropriate in our context. Coherently, our notions of continuous posets and domains 
are dual to the traditional ones. 
We thus say that the poset $P$ is \textit{continuous} if $\twoheaduparrow x := \{ y \in P : y \ssup x \}$ is a filter and $x = \bigwedge \twoheaduparrow x$, for all $x \in P$. A \textit{domain} is a continuous poset in which every filter has an infimum. 
A poset $P$ has the \textit{interpolation property} if, for all $x, y \in P$, if $y \ssup x$, there exists some $z \in P$ such that $y \ssup z  \ssup x$. 
In continuous posets it is well known that the interpolation property holds, see e.g.\ \cite[Theorem~I-1.9]{Gierz03}. This is a crucial feature that is behind many important results of the theory. 

%A poset $P$ has the \textit{interpolation property} if, for all $x, y \in P$ with $y \ssup x$, there exists some $z \in P$ such that $y \ssup z  \ssup x$. Recall that in every continuous poset, the interpolation property holds (see \cite[Theorem~I-1.9]{Gierz03} for a proof in the usual ``non-dual'' case). This is a crucial feature that is behind many important results of the theory. 

Well known examples of domains are $\reels_+$, $\overline{\reels}_+$, and $[0, 1]$. For these posets, the way-above relation coincides with the strict order $>$ (except perhaps at the top). These posets are commonly used as target sets for maxitive measures, and many trials were made for replacing them by more general ordered structures (see Greco \cite{Greco87}, Liu and Zhang \cite{Liu94}, De Cooman et al.\ \cite{deCooman01}, Kramosil \cite{Kramosil05a}). Nevertheless, the importance of the continuity assumption of these structures for applications to idempotent analysis or fuzzy set theory has been identified lately. Pioneers in this direction were Akian (see \cite{Akian95}, \cite{Akian99}) and Heckmann and Huth \cite{Heckmann98b, Heckmann98}. See Lawson \cite{Lawson04b} for a survey on the use of domain theory in idempotent mathematics. See also Poncet \cite{Poncet09} and references therein. 
%Note that Maslov in \cite{Maslov87} does not explicitely makes use of domain concepts, but considers maxitive (idempotent) measures taking values in a complete lattice in which the strict order $>$ (instead of the way-above relation) has the interpolation property. 

The paper is organized as follows. 
Sections~\ref{sec:ideals} and \ref{secmax} improve results of \cite{Akian99} and \cite{Heckmann98b, Heckmann98}: we give a representation theorem for maxitive measures, derive the extension theorem cited above, and revisit the problem of finding necessary and sufficient conditions for a maxitive measure to have a cardinal density. 
%These measures take values in a (possibly continuous) partially ordered set which may not be a complete lattice. %, so when continuity assumptions on this partially ordered set are required, we use the tools given in Section~\ref{domains}. 
%In Section~\ref{secmax} we 
Section~\ref{decompose} is new and states the announced decomposition theorem. % for maxitive measures, with a regular part that has a cardinal density, and a residual part whose properties are discussed. 
%In Section~\ref{conclusion} we conclude and mention possible future work. 

%\section{Domain-valued Maxitive Measures}\label{secmax}

\section{Representing Maxitive Measures by Ideals}\label{sec:ideals}

%In a poset $P$, a \textit{lower set} is a subset $A \subset P$ such that $A = \downarrow\!\!A$, where $\downarrow\!\!A := \{ y \in P : \exists x \in A, y \leqslant x \}$. 
An \textit{ideal} of the prepaving $\call{E}$ is a nonempty subset $\call{I}$ of $\call{E}$ which is stable under finite unions and such that, if $A \subset B$ and $B \in \call{I}$, then $A \in \call{I}$. % such that $\bigvee \Phi \in I$, for every nonempty finite subset $\Phi$ of $I$ whose supremum exists in $P$. %Such an ideal is not necessarily directed, so this differs from the standard definition (see for instance Gierz et al.\ \cite{Gierz80}). 

The next proposition, inspired by Nguyen et al.\ \cite{Nguyen97}, provides a generic way of constructing a maxitive measure from a nondecreasing family of ideals. 

\begin{proposition}\label{nguyen bouchon-meunier1}
%Assume that $L$ is filtered-complete. 
%Let a map $v : E \rightarrow L$. 
Let $(\call{I}_t)_{t \in L}$ be some family of ideals of $\call{E}$ such that, for all $G \in \call{E}$, $\{t \in L : G \in \call{I}_t \}$ is a filter with infimum. Define $\nu : \call{E} \rightarrow L$ by  
\begin{equation}\label{ideals0}
\nu(G) = \bigwedge \left\{t \in L : G \in \call{I}_t \right\}. % \qquad\qquad(\forall G\in\call{E}).
\end{equation}
If $(\call{I}_t)_{t \in L}$ is right-continuous, in the sense that $\call{I}_t = \bigcap_{s \ssup t} \call{I}_s$ for all $t \in L$, then $\nu$ is maxitive. 
\end{proposition}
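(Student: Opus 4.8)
The plan is to work throughout with the sets $F_G := \{ t \in L : G \in \call{I}_t \}$, so that by construction $\nu(G) = \bigwedge F_G$, the infimum existing by hypothesis. First I would dispose of the easy points. Since every ideal is nonempty and downward closed it contains $\emptyset$, whence $F_\emptyset = L$; as the hypothesis guarantees $F_\emptyset$ is a filter with infimum and $0 \in L$ is the bottom, $\bigwedge L = 0$, so $\nu(\emptyset) = 0$. Next I would reduce the maxitivity identity for an arbitrary finite family $\{G_j\}_{j \in J}$ to the binary case $\nu(G_1 \cup G_2) = \nu(G_1) \oplus \nu(G_2)$ by an immediate induction on the cardinality of $J$, using that $\call{E}$ is closed under finite unions so that every partial union again lies in $\call{E}$ and hence carries a well-defined $\nu$-value (the empty family reproducing $\nu(\emptyset)=0$).

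The binary case splits into checking that $\nu(G_1 \cup G_2)$ is an upper bound of $\{\nu(G_1), \nu(G_2)\}$ and that it is the least one. For the upper-bound part I would exploit that each $\call{I}_t$ is an ideal: because ideals are stable under finite unions and downward closed, $G_1 \cup G_2 \in \call{I}_t$ holds exactly when both $G_1 \in \call{I}_t$ and $G_2 \in \call{I}_t$, that is, $F_{G_1 \cup G_2} = F_{G_1} \cap F_{G_2}$. Since a subset has a larger infimum, the inclusions $F_{G_1 \cup G_2} \subseteq F_{G_i}$ yield $\nu(G_1 \cup G_2) \geqslant \nu(G_i)$ for $i = 1, 2$, so $\nu(G_1 \cup G_2)$ dominates both values.

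The crux is minimality: I must show that any $d \in L$ with $d \geqslant \nu(G_1)$ and $d \geqslant \nu(G_2)$ satisfies $d \geqslant \nu(G_1 \cup G_2)$. Since $\nu(G_1 \cup G_2) = \bigwedge(F_{G_1} \cap F_{G_2})$, it suffices to prove $d \in F_{G_1} \cap F_{G_2}$, i.e. $G_1 \cup G_2 \in \call{I}_d$. I would establish $G_i \in \call{I}_d$ for each $i$ by invoking right-continuity: since $\call{I}_d = \bigcap_{s \ssup d} \call{I}_s$, it is enough to check $G_i \in \call{I}_s$, equivalently $s \in F_{G_i}$, for every $s \ssup d$. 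Here the two hypotheses combine decisively: $F_{G_i}$ is a filter with infimum $\nu(G_i) \leqslant d$, so the very definition of $s \ssup d$ (applied to the filter $F = F_{G_i}$) forces $s \in F_{G_i}$. This is the step I expect to be the delicate one, being the only place where the way-above relation and the right-continuity assumption are genuinely used in tandem; the remainder is bookkeeping. Assembling these facts, $d$ lies in $F_{G_1} \cap F_{G_2}$ and therefore dominates its infimum $\nu(G_1 \cup G_2)$, which shows this value is the least upper bound. Hence $\nu(G_1 \cup G_2) = \nu(G_1) \oplus \nu(G_2)$, and the induction completes the argument.
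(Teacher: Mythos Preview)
Your proof is correct and follows essentially the same route as the paper's: both use the way-above relation applied to the filter $F_{G_i}=\{t:G_i\in\call{I}_t\}$ (whose infimum is $\nu(G_i)\leqslant d$) to deduce $G_i\in\call{I}_s$ for every $s\ssup d$, and then invoke right-continuity to land in $\call{I}_d$. The only cosmetic differences are that the paper argues directly for an arbitrary finite family rather than reducing to the binary case, and that you are more explicit about the $\nu(\emptyset)=0$ verification and the invocation of the definition of $\ssup$.
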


\begin{remark}
Assuming that $\{t \in L : G \in \call{I}_t \}$ is a filter for all $G \in \call{E}$ makes the family $(\call{I}_t)_{t \in L}$ necessarily nondecreasing. 
\end{remark}

\begin{proof}
Let $\nu$ be given by Equation~(\ref{ideals0}). Obviously, $\nu$ is order-preserving, so it remains to show that, for all finite family $\{G_j\}_{j \in J}$ of elements of $\call{E}$, and for every upper bound $m \in L$ of $\{\nu(G_j)\}_{j \in J}$, we get $m \geqslant \nu(\bigcup_{j \in J} G_j)$. 
Let $s \ssup m$. One has $G_j \in \call{I}_s$ for all $j \in J$, thus $\bigcup_{j \in J} G_j \in \call{I}_{s}$. This implies $\bigcup_{j \in J} G_j \in \bigcap_{s \ssup m} \call{I}_s = \call{I}_m$. Eventually $m \geqslant \bigwedge \{r \in L : \bigcup_{j \in J} G_j \in \call{I}_{r} \} = \nu(\bigcup_{j \in J} G_j)$, so $\nu$ is maxitive. 
\end{proof}

Supposing the continuity of the range $L$ of the maxitive measure enables us to remove the assumption of right-continuity of the family of ideals and gives the converse statement as follows. 

\begin{proposition}\label{nguyen bouchon-meunier2}
Assume that $L$ is a continuous poset. 
%Let $\overline{L}/L$ be a local $\vee$-domain extension. % which is a bounded-join-semilattice. 
A map $\nu : \call{E} \rightarrow L$ is a maxitive measure if and only if there is some family $(\call{I}_t)_{t \in L}$ of ideals of $\call{E}$ such that, for all $G \in \call{E}$, $\{t \in L : G \in \call{I}_t \}$ is a filter with infimum and 
\begin{equation*}%\label{ideals1}
\nu(G) = \bigwedge \left\{t \in L : G \in \call{I}_t \right\}.% \qquad\qquad(\forall G\in\call{E}).
\end{equation*}
In this case, $(\call{I}_t)$ is right-continuous  if and only if $\call{I}_t = \{ G \in \call{E} : t \geqslant \nu(G) \}$ for all $t \in L$. %, while $I_t = \{ g \in E : t \ssup v(g) \}$ for all $t \in L$ if and only if $(I_t)$ is left-continuous, i.e. $I_t = \bigcup_{t \ssup s} I_s$. 
\end{proposition}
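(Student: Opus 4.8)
The plan is to treat the three assertions separately: the two implications of the equivalence, and then the characterisation of right-continuity. For the ``only if'' part (a maxitive $\nu$ admits such a representation), I would exhibit the canonical family $\call{I}_t := \{ G \in \call{E} : t \geqslant \nu(G) \}$. A direct check shows each $\call{I}_t$ is an ideal: it contains $\emptyset$ since $\nu(\emptyset) = 0$; it is stable under finite unions because $\nu(G_1 \cup G_2) = \nu(G_1) \oplus \nu(G_2) \leqslant t$ whenever $\nu(G_1), \nu(G_2) \leqslant t$; and it is downward closed because $\nu$ is order-preserving, which itself follows from $\nu(B) = \nu(A) \oplus \nu(B) \geqslant \nu(A)$ for $A \subset B$. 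Moreover $\{ t : G \in \call{I}_t \} = \{ t : t \geqslant \nu(G) \}$ is the principal filter generated by $\nu(G)$, whose infimum is exactly $\nu(G)$, so the required representation holds. This direction does not even use continuity of $L$.

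The heart of the argument is the converse (existence of such a family forces $\nu$ maxitive), where right-continuity is no longer assumed and must be replaced by the continuity of $L$. Writing $F_G := \{ t \in L : G \in \call{I}_t \}$, order-preservation of $\nu$ is immediate, since $A \subset B$ forces $F_B \subseteq F_A$ (each $\call{I}_t$ being downward closed), hence $\nu(A) \leqslant \nu(B)$. For maxitivity, given a finite family $\{ G_j \}_{j \in J}$ with $\bigcup_j G_j \in \call{E}$, it suffices to show that every upper bound $m$ of $\{ \nu(G_j) \}$ dominates $\nu(\bigcup_j G_j)$. Here I would invoke continuity to write $m = \bigwedge \twoheaduparrow m$ with $\twoheaduparrow m$ a filter. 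For each $s \ssup m$ and each $j$, since $m \geqslant \nu(G_j) = \bigwedge F_{G_j}$ and $F_{G_j}$ is a filter with infimum, the very definition of the way-above relation gives $s \in F_{G_j}$, that is $G_j \in \call{I}_s$; stability of $\call{I}_s$ under finite unions then yields $\bigcup_j G_j \in \call{I}_s$. Thus $\twoheaduparrow m \subseteq F_{\bigcup_j G_j}$, and taking infima gives $\nu(\bigcup_j G_j) \leqslant \bigwedge \twoheaduparrow m = m$, as wanted. I expect this passage --- converting ``$m$ dominates each $\nu(G_j)$'' into the membership $G_j \in \call{I}_s$ for all $s \ssup m$ via the way-above relation --- to be the main obstacle, since it is precisely where the continuity of $L$ performs the work previously done by right-continuity.

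Finally, for the characterisation, set $\call{J}_t := \{ G \in \call{E} : t \geqslant \nu(G) \}$. The inclusion $\call{I}_t \subseteq \call{J}_t$ always holds, because $G \in \call{I}_t$ puts $t$ in $F_G$, whence $\nu(G) = \bigwedge F_G \leqslant t$. If the family coincides with the canonical one, so that $\call{I}_t = \call{J}_t$, then right-continuity $\call{J}_t = \bigcap_{s \ssup t} \call{J}_s$ follows from $t = \bigwedge \twoheaduparrow t$: one inclusion uses $s \ssup t \Rightarrow s \geqslant t$, the other uses that a lower bound of $\twoheaduparrow t$ is at most $t$. Conversely, assuming right-continuity, to obtain $\call{J}_t \subseteq \call{I}_t$ I would take $G$ with $\nu(G) \leqslant t$ and show $G \in \call{I}_s$ for every $s \ssup t$: indeed $t \geqslant \nu(G) = \bigwedge F_G$, and the way-above relation gives $s \in F_G$; right-continuity $\call{I}_t = \bigcap_{s \ssup t} \call{I}_s$ then places $G$ in $\call{I}_t$. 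Combined with $\call{I}_t \subseteq \call{J}_t$, this yields $\call{I}_t = \call{J}_t$ and completes the equivalence.
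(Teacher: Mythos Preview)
Your proof is correct. The only structural difference from the paper is in the converse (``if'') direction: the paper constructs the right-continuous envelope $\call{J}_t := \bigcap_{s \ssup t} \call{I}_s$, checks (via the interpolation property and continuity of $L$) that $\nu$ is still represented by $(\call{J}_t)$, and then invokes Proposition~\ref{nguyen bouchon-meunier1}; you instead argue maxitivity directly, using $s \ssup m \geqslant \bigwedge F_{G_j}$ to force $s \in F_{G_j}$. This is exactly the computation hidden inside the combination of the paper's two steps, so the underlying mechanism is the same --- your version is simply the unrolled, self-contained form, while the paper's is modular and reuses the earlier proposition. For the forward direction and the right-continuity characterisation, your argument matches the paper's essentially line for line.
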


\begin{proof}
If $\nu$ is maxitive, simply take $\call{I}_t = \{ G \in \call{E} : t \geqslant \nu(G) \}$, $t \in L$, which is right-continuous since $L$ is continuous. 
Conversely, assume that Equation~(\ref{ideals0}) is satisfied. Let $\call{J}_t = \bigcap_{s \ssup t} \call{I}_s$. $(\call{J}_t)_{t\in L}$ is a nondecreasing family of ideals of $\call{E}$ such that $\call{J}_t \supset \call{I}_t$ for all $t \in L$. Moreover, $(\call{J}_t)_{t \in L}$ is right-continuous thanks to the interpolation property, and by continuity of $L$ one has $\nu(G) =  \bigwedge\{t \in L : G \in \call{J}_t\}$. Using Proposition~\ref{nguyen bouchon-meunier1}, $\nu$ is maxitive. 

Assume that $(\call{I}_t)$ is right-continuous. The inclusion $\call{I}_t \subset \{ G \in \call{E} : t \geqslant \nu(G) \}$ is clear. If $t \geqslant \nu(G)$, we want to show that $G \in \call{I}_t$, i.e.\ $G \in \call{I}_s$ for all $s \ssup t$. So let $s \ssup t \geqslant \nu(G)$. Equation~(\ref{ideals0}) implies that $G \in \call{I}_s$, and the inclusion $\call{I}_t \supset \{ G \in \call{E} : t \geqslant \nu(G) \}$ is proved. 
%If $I_t = \{ g \in E : t \ssup v(g) \}$ for all $t \in L$, then $(I_t)$ is left-continuous thanks to the interpolation property. Conversely, assume that $(I_t)$ is left-continuous. Once again the inclusion $I_t \subset \{ g \in E : t \ssup v(g) \}$ is clear. If $t \ssup v(g)$, Equation~(\ref{ideals0}) implies that there is some $s \in L$ such that $t \ssup s$ and $g \in I_s$. With the help of left-continuity, one gets $I_t \supset \{ g \in E : t \ssup v(g) \}$. 
\end{proof}

%Equation~(\ref{ideals0}) makes us see $v(g)$ as the first time when the ``event'' $g$ enters the ``filtration'' $(I_t)_{l \in L}$. 

From Proposition~\ref{nguyen bouchon-meunier2} we can deduce the following corollary, which most of the time enables one to extend a maxitive measure to the entire power set $2^{E}$. This is a slight improvement of Heckmann and Huth \cite[Proposition~12]{Heckmann98} and Akian \cite[Proposition~3.1]{Akian99}, the latter being inspired by Maslov \cite[Theorem~VIII-4.1]{Maslov87}. %, but it must be pointed out that all these works only deal with set functions. 

Henceforth, $\call{E}^*$ denotes the collection of all $A \subset E$ such that $\{ G \in \call{E} : G \supset A \}$ is a filter. Notice that $\call{E}^*$ is a prepaving containing all singletons, and if $\call{E}$ contains $E$, then $\call{E}^*$ merely coincides with the power set of $E$. 

\begin{proposition}\label{defetoile}
Assume that $L$ is a domain. 
Let $\nu$ be an $L$-valued maxitive measure on $\call{E}$. The map $\nu^* : \call{E}^* \rightarrow L$ defined by 
\begin{equation}\label{extension} 
\nu^*(A) = \bigwedge_{G \in \call{E}, G \supset A} \nu(G)
\end{equation}
is the maximal maxitive measure extending $\nu$ to $\call{E}^*$. 
\end{proposition}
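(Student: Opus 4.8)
The plan is to prove three things: that $\nu^*$ is well-defined (the infimum exists) and extends $\nu$, that $\nu^*$ is itself a maxitive measure on $\call{E}^*$, and finally that it is maximal among all maxitive extensions to $\call{E}^*$. Since $L$ is a domain, every filter has an infimum, so the first point reduces to checking that $\{\nu(G) : G \in \call{E},\, G \supset A\}$ is a filter whenever $A \in \call{E}^*$. By definition of $\call{E}^*$, the family $\{G \in \call{E} : G \supset A\}$ is a filter in the set-inclusion order, and since $\nu$ is order-preserving, its image under $\nu$ is a filtered subset of $L$; taking the down-closure (or arguing that $\nu$ of a filtered family is again filtered with the same infimum) gives a genuine filter, so $\bigwedge$ exists. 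That $\nu^*$ extends $\nu$ is immediate: for $G \in \call{E}$ the smallest superset in $\call{E}$ is $G$ itself, so the infimum is attained at $G$ and equals $\nu(G)$.

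Next I would verify maxitivity of $\nu^*$. First, $\nu^*(\emptyset) = \nu(\emptyset) = 0$. For a finite family $\{A_j\}_{j \in J}$ in $\call{E}^*$ with $\bigcup_j A_j \in \call{E}^*$, I would show $\nu^*(\bigcup_j A_j) = \bigoplus_j \nu^*(A_j)$. The inequality $\geqslant$ follows from monotonicity of $\nu^*$. For $\leqslant$, I would use the filter structure: given $m$ an upper bound of $\{\nu^*(A_j)\}$, I want $m \geqslant \nu^*(\bigcup_j A_j)$. Using continuity of $L$, it suffices to show $s \geqslant \nu^*(\bigcup_j A_j)$ for every $s \ssup m$; for each $j$, since $s \ssup m \geqslant \nu^*(A_j) = \bigwedge\{\nu(G) : G \supset A_j\}$ and this set of values forms a filter, the way-above relation forces some $G_j \in \call{E}$ with $G_j \supset A_j$ and $s \geqslant \nu(G_j)$. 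Then $\bigcup_j G_j \in \call{E}$ (finite union), contains $\bigcup_j A_j$, and $\nu(\bigcup_j G_j) = \bigoplus_j \nu(G_j) \leqslant s$ by maxitivity of $\nu$, whence $\nu^*(\bigcup_j A_j) \leqslant \nu(\bigcup_j G_j) \leqslant s$.

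For maximality, I would let $\mu$ be any maxitive measure on $\call{E}^*$ agreeing with $\nu$ on $\call{E}$, and show $\mu(A) \leqslant \nu^*(A)$ for every $A \in \call{E}^*$. This is the most delicate point and the expected main obstacle, because it requires controlling $\mu$ using only its restriction to $\call{E}$. The key observation is that $\mu$ is order-preserving, so for every $G \in \call{E}$ with $G \supset A$ one has $\mu(A) \leqslant \mu(G) = \nu(G)$; taking the infimum over all such $G$ yields $\mu(A) \leqslant \bigwedge_{G \supset A} \nu(G) = \nu^*(A)$. Thus every maxitive extension is dominated by $\nu^*$, and combined with the fact that $\nu^*$ is itself such an extension, this establishes maximality. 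The subtle part to get right is confirming that order-preservation of $\mu$ is a genuine consequence of maxitivity on the prepaving $\call{E}^*$ (it follows since $A \subset B$, $B \in \call{E}^*$ give $B = A \cup B$ and $\mu(B) = \mu(A) \oplus \mu(B) \geqslant \mu(A)$, using that $A, B, A \cup B \in \call{E}^*$), so the monotonicity argument is fully justified.
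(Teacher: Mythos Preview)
Your proof is correct. One small slip: you write ``down-closure'' where you mean the upward closure (a filter in the paper's sense is an upper set), but the parenthetical remark shows you have the right idea, and the infimum of a filtered set agrees with that of its up-closure in any case.

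Your route differs genuinely from the paper's. The paper proves maxitivity of $\nu^*$ by invoking the ideal representation developed just before: starting from the ideals $\call{I}_t = \{G \in \call{E} : \nu(G) \leqslant t\}$ that represent $\nu$, it defines $\call{I}^*_t = \{A \in \call{E}^* : A \subset B \text{ for some } B \in \call{I}_t\}$, checks that $\{t : A \in \call{I}^*_t\}$ is a filter, identifies $\nu^*(A)$ with $\bigwedge\{t : A \in \call{I}^*_t\}$, and then reads off maxitivity from Proposition~\ref{nguyen bouchon-meunier2}. The maximality claim is left to the reader. Your argument is instead a direct $\varepsilon$-style computation using the way-above relation: approximate each $\nu^*(A_j)$ from above by some $\nu(G_j)$ and use maxitivity of $\nu$ on the finite union $\bigcup_j G_j$. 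The paper's approach is shorter on the page because it cashes in machinery already built, and it illustrates why the ideal representation is useful; your approach is self-contained and makes the role of continuity of $L$ (and of the interpolation / way-above property) completely explicit, which is arguably more transparent for a reader who has not internalised Propositions~\ref{nguyen bouchon-meunier1}--\ref{nguyen bouchon-meunier2}.
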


%This proposition slightly improves \cite[Proposition~12]{Heckmann98} and \cite[Proposition~3.1]{Akian99}, the latter being inspired by \cite[Chapter~VIII, Theorem~4.1]{Maslov87}. 

\begin{proof}
If $\nu$ is defined by Equation~(\ref{ideals0}), let $\call{I}^*_t$ denote the collection of all $A \in \call{E}^*$ such that $A \subset B$ for some $B \in \call{I}_t$. 
%$ = \{ A \in \call{E}^* : \uparrow\!\! a \cap I_t \neq \emptyset \} = \downarrow\!\! I_t$. 
Then $(\call{I}^*_t)_{t \in L}$ is a nondecreasing family of ideals of $\call{E}^*$ and, for all $A \in \call{E}^*$, $\{t \in L : A \in \call{I}^*_t \} = \bigcup_{G \in \call{E}, G \supset A} \{ t \in L : G \in \call{I}_t \}$ is a filter in $L$. 
Now the fact that $\nu^*(A) = \bigwedge \{ t \in L : A \in \call{I}^*_t \}$ and Proposition~\ref{nguyen bouchon-meunier2} show that $\nu^*$ is maxitive. 
The assertion that $\nu^*$ is the maximal maxitive measure extending $\nu$ to $\call{E}^*$ is not difficult and left to the reader. 
%let $s, t \in L$ such that $I^*_s$ and $I^*_t$ contain $a$. One can find $f \in I_s, g \in I_t$ such that $f \wedge g \geqslant a$. By the definition of $E^*$ there is some $h \in E$ such that $a \leqslant h \leqslant f \wedge g$, hence $h \in I_s \cap I_t$. Since $s, t \in \{ u \in L : h \in I_u \}$ which is filtered, $\{t \in L : a \in I^*_t \}$ is filtered.  
\end{proof}

This corollary also generalises a result due to Kramosil \cite[Theorem~15.2]{Kramosil05a}, where it is assumed that $L$ is a complete chain (which is necessarily a continuous complete semilattice). %Recall that in our context every poset is endowed with a bottom, so a chain is a continuous complete semilattice if and only if it is a locally complete chain. 
%An analog proof shows that if $v$ is a completely maxitive map defined on a complete join-semilattice, then $\nu^*$ is completety maxitive. 
A proof may also be found in \cite{Poncet09} in the general setting of \textit{maxitive maps}.

\section{Cardinal Densities for Maxitive Measures}\label{secmax}

We assume in the remaining part of this paper that $\call{E}$ is a \textit{paving} on $E$, that is a collection of subsets of $E$ containing the empty set, closed under finite unions, covering $E$, and such that, for all $x \in E$, $\{ G \in \call{E} : G \ni x \}$ is nonempty filtered in $\call{E}$ (ordered by inclusion). %and finite intersections, and covering $E$. 

%*****************************
%\begin{remark} 
One could certainly think of $\call{E}$ as the base of some topology $\call{G}$ on $E$. 
Also, $\call{E}$ could be thought of as the collection of compact subsets of $E$ when equipped with some topology $\call{O}$ (in which case $\call{G}$ coincides with the power set of $E$), or as the Borel sets of $(E,\call{O})$. 
This variety of examples explains why we do not assume $\call{E}$ be closed under finite intersections. This also highlights why the hypothesis $E \in \call{E}$, adopted by Akian \cite{Akian99}, may be rather restrictive (see the example given above, where $\call{E}$ is the paving of all finite subsets of $E$). 
%\end{remark}
%*****************************

%As a consequence of this definition, $\call{E}$ is the base of a topology $\call{G}$ on $E$. 
The collection of (not necessarily Hausdorff) compact subsets of $E$ for the topology $\call{G}$ generated by $\call{E}$ is denoted by $\call{K}$. Note that we always have $\call{E}^* \supset \call{K}$. 

The following theorem gives necessary and sufficient for a maxitive measure to have a density. It goes one step further than  \cite[Theorem~3]{Heckmann98} and \cite[Proposition~3.15]{Akian99}, for we do not need the paving $\call{E}$ to be a topology, and the range $L$ of the maxitive measure to be a (locally) complete lattice. 

\begin{theorem}\label{general}
Assume that $L$ is a domain, and let $\nu$ be an $L$-valued maxitive measure on $\call{E}$. The following conditions are equivalent:
\begin{enumerate}
	\item\label{cmpl1} $\nu$ is completely maxitive, 
	\item\label{gene1} $\nu$ is inner-continuous, i.e.\, for all $G \in \call{E}$, 
	$$
	\nu(G) = \bigoplus_{ K \subset G, K \in \call{K}} \nu^*(K), 
	$$
	\item\label{gene2} $\nu$ has a density. 
\end{enumerate}
If these conditions are satisfied, $\nu$ admits $c^* : x \mapsto \nu^*(\{ x \})$ as maximal density, and $c^*$ is an upper semicontinuous map on $E$. 
\end{theorem}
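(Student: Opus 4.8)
The plan is to prove the cyclic chain $(\ref{cmpl1})\Rightarrow(\ref{gene1})\Rightarrow(\ref{gene2})\Rightarrow(\ref{cmpl1})$, concentrating the work in the first two implications; the last is a routine reshuffling of suprema. The recurring device, used to control $c^*(x)=\nu^*(\{x\})=\bigwedge_{G\ni x}\nu(G)$, is the following extraction principle. Since $\{G\in\call{E}:G\ni x\}$ is filtered and $\nu$ is order-preserving, the set $\{\nu(G):G\ni x\}$ is a filtered subset of $L$; the filter $F^x$ it generates has infimum $c^*(x)$ (the infimum exists because $L$ is a domain). Hence for any $t\ssup c^*(x)=\bigwedge F^x$ the way-above relation forces $t\in F^x$, that is, it furnishes some $G\in\call{E}$ with $x\in G$ and $\nu(G)\leqslant t$. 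A second ingredient is that the paving axiom makes $\call{E}$ a base of the topology $\call{G}$: if $x\in G_1\cap G_2$ with $G_1,G_2\in\call{E}$, the filtered condition yields $G_3\in\call{E}$ with $x\in G_3\subset G_1\cap G_2$. This is what lets us replace a basic open by a smaller one lying inside a prescribed $G$, compensating for the fact that $\call{E}$ is not assumed stable under intersection.

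For $(\ref{cmpl1})\Rightarrow(\ref{gene1})$, note that $\nu^*$ is order-preserving and extends $\nu$, so $\nu^*(K)\leqslant\nu(G)$ for every $K\subset G$; thus $\nu(G)$ is an upper bound of $\{\nu^*(K):K\subset G,\,K\in\call{K}\}$ and it remains to see it is the least one. Let $m$ be any upper bound; since $L$ is continuous it suffices to prove $\nu(G)\leqslant t$ for every $t\ssup m$, for then $\nu(G)\leqslant\bigwedge\twoheaduparrow m=m$. Fix such a $t$. Singletons lie in $\call{K}$, so $m\geqslant c^*(x)$ for all $x\in G$, and the extraction principle gives, for each $x\in G$, some $G_x\in\call{E}$ with $x\in G_x$ and $\nu(G_x)\leqslant t$; the base property then yields $H_x\in\call{E}$ with $x\in H_x\subset G\cap G_x\subset G$, whence $\nu(H_x)\leqslant\nu(G_x)\leqslant t$. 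Now $\{H_x\}_{x\in G}$ is a family of elements of $\call{E}$ with $\bigcup_{x} H_x=G\in\call{E}$, so complete maxitivity gives $\nu(G)=\bigoplus_{x}\nu(H_x)\leqslant t$, as wanted. No prior existence of suprema is presupposed here: we have exhibited $\nu(G)$ directly as a least upper bound.

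The implication $(\ref{gene1})\Rightarrow(\ref{gene2})$ rests on the compactness lemma $\nu^*(K)=\bigoplus_{x\in K}c^*(x)$, valid for every $K\in\call{K}$ and any maxitive $\nu$. Its proof mirrors the previous paragraph but replaces complete maxitivity by a finite subcover: monotonicity gives $c^*(x)\leqslant\nu^*(K)$, and for an upper bound $m$ and $t\ssup m$ the extraction principle produces an open cover $\{G_x\}_{x\in K}$ of $K$ with $\nu(G_x)\leqslant t$; compactness yields a finite subcover whose union $G'\in\call{E}$ satisfies $\nu^*(K)\leqslant\nu(G')=\bigoplus_i\nu(G_{x_i})\leqslant t$ by finite maxitivity, so $\nu^*(K)\leqslant m$. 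Feeding this into (\ref{gene1}) collapses $\bigoplus_{K\subset G}\nu^*(K)$ to $\bigoplus_{x\in G}c^*(x)$, so $c^*$ is a density. Finally $(\ref{gene2})\Rightarrow(\ref{cmpl1})$: if $c$ is any density and $\bigcup_{j}G_j\in\call{E}$, one rearranges $\nu(\bigcup_j G_j)=\bigoplus_{x\in\bigcup_j G_j}c(x)=\bigoplus_j\bigoplus_{x\in G_j}c(x)=\bigoplus_j\nu(G_j)$.

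For the concluding assertions, maximality of $c^*$ is immediate: any density $c$ satisfies $c(x)\leqslant\bigoplus_{y\in G}c(y)=\nu(G)$ for every $G\ni x$, hence $c(x)\leqslant\bigwedge_{G\ni x}\nu(G)=c^*(x)$. Upper semicontinuity of $c^*$ amounts to showing that $\{x\in E:t\ssup c^*(x)\}$ is open for each $t\in L$, and here the interpolation property is decisive. Given $t\ssup c^*(x_0)$, choose $s$ with $t\ssup s\ssup c^*(x_0)$; the extraction principle applied to $s$ yields $G\in\call{E}$ with $x_0\in G$ and $\nu(G)\leqslant s$, so every $y\in G$ has $c^*(y)\leqslant\nu(G)\leqslant s$, and since the way-above relation is downward closed in its second argument, $t\ssup s\geqslant c^*(y)$ gives $t\ssup c^*(y)$. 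Thus $G$ is an open neighbourhood of $x_0$ contained in the set, which is therefore open. The one genuine obstacle throughout is the lack of stability of $\call{E}$ under intersection; it is precisely the filtered paving axiom, turning $\call{E}$ into a base, that removes it and makes the two covering arguments go through.
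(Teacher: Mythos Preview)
Your proof is correct and follows essentially the same route as the paper's: the compactness lemma $\nu^*(K)=\bigoplus_{x\in K}c^*(x)$ is the paper's ``Fact~1'', your collapsing step is its ``Fact~2'', and your covering argument for $(\ref{cmpl1})\Rightarrow(\ref{gene1})$ is identical to the paper's proof of $(\ref{cmpl1})\Rightarrow(\ref{gene2})$ (you simply observe that singletons are compact to rephrase the same conclusion). The only cosmetic difference is that the paper orders its implications differently, and that you make the interpolation step in the upper-semicontinuity argument explicit, whereas the paper compresses it into the single claim ``there is some $G\ni x$ with $t\ssup\nu(G)$''.
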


The concept of upper semicontinuity for poset-valued maps, that we do not recall here, is treated by Penot and Th\'era \cite{Penot82}, Beer \cite{Beer87}, van Gool \cite{vanGool92}, Gerritse \cite{Gerritse97}, Akian and Singer \cite{Akian03}.

\begin{proof}
Fact~1: The restriction of $\nu^*$ to $\call{K}$ admits $c^*$ as cardinal density. Let $K \in \call{K}$ and $m$ be an upper bound of $\{ c^*(x) : x \in K \}$. We want to show that $m \geqslant \nu^*(K)$, so let $s \ssup m$. For any $x \in K$, $s \ssup c^*(x) = \bigwedge_{G \ni x} \nu(G)$, so there is some $G_x \ni x$, $G_x \in \call{E}$, such that $s \geqslant \nu(G_x)$. Since $K$ is compact and $\bigcup_{x \in K} G_x \supset K$, we can extract a finite subcover and write $\bigcup_{j = 1}^k G_{x_j} \supset K$. Thus, $s \geqslant \nu^*(K)$ for any $s \ssup m$, so $m \geqslant \nu^*(K)$ thanks to continuity of $L$. Since $\nu^*(K)$ is itself an upper bound of $\{ c^*(x) : x \in K \}$, this proves that the supremum of $\{ c^*(x) : x \in K \}$ exists and equals $\nu^*(K)$. % = \bigoplus_{x \in K} c^*(x)$. 

Fact~2: If either $\{\nu^*(K) : K \subset G, K\in\call{K}\}$ or $\{ c^*(x) : x \in G \}$ has a supremum, then 
$$
\bigoplus_{ K \subset G, K \in \call{K}} \nu^*(K) = \bigoplus_{x\in G} c^*(x). 
$$
It suffices to show that both sets have the same upper bounds. Denoting $A^{\uparrow}$ for the set of upper bounds of a subset $A \subset E$, the inclusion $\{\nu^*(K) : K \subset G, K\in\call{K}\}^{\uparrow} \subset \{ c^*(x) : x \in G \}^{\uparrow}$ is due to the fact that $c^*(x) = \nu^*(\{x\})$ and $\{ x \} \in\call{K}$ for any $x \in G$. The equality holds thanks to Fact~1.

Now the implications (\ref{gene1}) $\Rightarrow$ (\ref{gene2}) $\Rightarrow$ (\ref{cmpl1}) are obvious. Let us show that (\ref{gene2}) $\Rightarrow$ (\ref{gene1}). Assume that $\nu(G) = \bigoplus_{x\in G} c(x)$ for all $G\in\call{E}$. Then it is easily seen that $c$ can be replaced by $c^*$ as a density, i.e.\ $\nu(G) = \bigoplus_{x\in G} c^*(x)$ for all $G\in\call{E}$, and the result can be deduced from Fact~2. 

Assume that (\ref{cmpl1}) is satisfied and let $G \in \call{E}$. An upper bound of $\{ c^*(x) : x\in G \}$ is $\nu(G)$. Now let $m$ be an upper bound of $\{ c^*(x) : x\in G \}$. Let $s \ssup m$. The definition of $c^*$ implies that, for all $x \in G$ there is some $G_x \in \call{E}$, $G_x \ni x$, such that $s \ssup \nu(G_x)$. $\call{E}$ is a paving, so there is some $H_x \in \call{E}$ such that $G \cap G_x \supset H_x \ni x$. Since $G = \bigcup_{x \in G} H_x \in  \call{E}$ and $\nu$ is completely maxitive, we deduce that $s \geqslant \nu(G)$. The continuity of $L$ implies that $m \geqslant \nu(G)$, and (\ref{gene2}) is proved. 

To conclude, let us show that $c^*$ is upper semi-continuous, i.e.\ that $\{ t \ssup c^* \}$ is open in $E$ for all $t\in L$. If $t \ssup c^*(x)$, then with the definition of $c^*$ there is some $G \ni x$, which is open in the topology $\call{G}$ generated by $\call{E}$, such that $t \ssup \nu(G)$, which implies that $G \subset \{ t \ssup c^* \}$. 
\end{proof}

Both forthcoming propositions were formulated and proved in \cite{Akian95} in the case where $E$ is a topological space and $L$ is a continuous lattice, see also \cite[Proposition~13]{Heckmann98}. We need to consider $\call{F} = \{ F \subset E : F \in \call{E}^* \mbox{ and } F^c \in \call{E} \}$ and $\call{H} = \call{K} \cap \call{F}$. If one takes the case where $\call{E}$ is a Hausdorff topology, then $\call{F}$ is the collection of closed subsets, and $\call{H} = \call{K}$ is that of compact subsets. 

\begin{proposition}\label{prop:k}
If $L$ is a domain and $\nu$ is an $L$-valued maxitive measure on $\call{E}$, then $\nu$ preserves filtered intersections of elements of $\call{H}$, i.e.\ 
\begin{equation}\label{eqcompactfermes}
\bigwedge_{j\in J} \nu^*(H_j) = \nu^*(\bigcap_{j \in J} H_j), 
\end{equation}
for every filtered family $(H_j)_{j\in J}$ of elements of $\call{H}$ such that $\bigcap_{j \in J} H_j \in \call{H}$. 
\end{proposition}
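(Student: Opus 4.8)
The plan is to prove the two inequalities separately: the inequality $\nu^*(\bigcap_{j} H_j) \leqslant \bigwedge_j \nu^*(H_j)$ from monotonicity alone, and the reverse inequality from a compactness argument dual to the one carried out in Fact~1 of the proof of Theorem~\ref{general}.

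First I would record the bookkeeping. By Proposition~\ref{defetoile}, $\nu^*$ is a maxitive measure on $\call{E}^*$, hence order-preserving, and $\call{H} = \call{K} \cap \call{F} \subset \call{E}^*$, so all the quantities $\nu^*(H_j)$ and $\nu^*(\bigcap_{j} H_j)$ are well defined. Since $(H_j)_{j\in J}$ is filtered and $\nu^*$ is order-preserving, the set $\{\, \nu^*(H_j) : j \in J \,\}$ is filtered in $L$, so it has an infimum $m := \bigwedge_j \nu^*(H_j)$ because $L$ is a domain. Writing $H = \bigcap_j H_j$, monotonicity gives $\nu^*(H) \leqslant \nu^*(H_j)$ for every $j$, whence $\nu^*(H) \leqslant m$; it remains to prove $m \leqslant \nu^*(H)$.

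For the reverse inequality I would unfold $\nu^*(H) = \bigwedge_{G \in \call{E},\, G \supset H} \nu(G)$ and note that, since this is a greatest lower bound, the inequality $m \leqslant \nu^*(H)$ is equivalent to $m \leqslant \nu(G)$ for every $G \in \call{E}$ with $G \supset H$. So I fix such a $G$ and argue by compactness. Choose any index $j_0$; then $\{\, G \,\} \cup \{\, H_j^c : j \in J \,\}$ is an open cover of the compact set $H_{j_0}$ in the topology $\call{G}$, because each $H_j^c$ is $\call{G}$-open ($H_j \in \call{F}$) and $G$ is $\call{G}$-open, while the covering property follows from $H = \bigcap_j H_j \subset G$ (any $x \in H_{j_0} \setminus G$ misses some $H_j$). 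Extracting a finite subcover $G \cup H_{j_1}^c \cup \dots \cup H_{j_n}^c \supset H_{j_0}$ and using that $(H_j)$ is filtered, I pick a single index $j_*$ with $H_{j_*} \subset H_{j_0} \cap H_{j_1} \cap \dots \cap H_{j_n}$, which forces $H_{j_*} \subset G$. Since $H_{j_*} \in \call{E}^*$ and $G \supset H_{j_*}$, the definition of $\nu^*$ gives $\nu^*(H_{j_*}) \leqslant \nu(G)$, and $m \leqslant \nu^*(H_{j_*})$ then yields $m \leqslant \nu(G)$, as wanted.

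The main obstacle is precisely the passage from a finite subcover to a single member $H_{j_*}$ contained in $G$: this is where both the filtered hypothesis on $(H_j)$ and the compactness of $H_{j_0}$ are essential. I would emphasize that the argument uses compactness only through an open cover of one fixed $H_{j_0}$, so it does not require $E$ to be Hausdorff; everything else reduces to unfolding the definition of $\nu^*$ and the meaning of infima in the domain $L$.
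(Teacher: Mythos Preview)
Your argument is correct and is essentially the same as the paper's: both show that for any $G\in\call{E}$ with $G\supset H$ some $H_{j_*}\subset G$, via a compactness/finite-subcover argument combined with filteredness, and then conclude $m\leqslant\nu(G)$. The only difference is packaging: the paper first isolates the lemma ``a filtered family in $\call{H}$ with empty intersection has an empty member'' and applies it to $(H_j\setminus G)_{j\in J}$, whereas you run the open-cover argument for $H_{j_0}$ directly with $G$ adjoined to the cover.
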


\begin{proof}
Let $(H_j)_{j \in J}$ be a filtered family of elements of $\call{H}$. If all $H_j$ are nonempty, then this family has nonempty intersection $H$. Indeed, if $H = \emptyset$ and $j_0 \in J$, then $\emptyset = H_{j_0} \cap \bigcap_{j \neq j_0} H_j$, i.e.\ $H_{j_0} \subset \bigcup_{j \neq j_0} H_j^c$. Since $H_j^c \in \call{E}$, we can extract a finite subcover and write $H_{j_0} \subset \bigcup_{i=1}^k H_{j_i}^c$, i.e.\ $\emptyset = H_{j_0} \cap \bigcap_{i=1}^k H_{j_i}$. The family $(H_j)_{j \in J}$ is filtered, so this implies that one of the $H_j$ is empty. 

Now, let us come back to Equality~(\ref{eqcompactfermes}). The set $\{  \nu^*(H_j) : j \in J \}$ admits $\nu^*(H)$ as lower bound. Take another lower bound $m$, and let $G \in \call{E}$ such that $G \supset H$. The family $(H_j \backslash G)_{j \in J}$ is a filtered family of elements of $\call{H}$ with empty intersection, thus $H_j \backslash G = \emptyset$ for some $j \in J$. This implies $\nu^*(H_j)\leqslant \nu(G)$, hence $m \leqslant \nu(G)$ for all $G \supset H$, so that $m \leqslant \nu^*(H)$. We have shown that $\nu^*(H)$ is the greastest lower bound of $\{  \nu^*(H_j) : j \in J \}$. 
\end{proof}

Tightness for maxitive measures can be defined by analogy with tightness for additive measures, so we say that  
an $L$-valued maxitive measure $\nu$ on $\call{E}$ is \textit{tight} if 
$$
\bigwedge_{H \in \call{H}} \nu(H^c) = 0.
$$ 
(In \cite{Puhalskii01}, a tight normed completely maxitive measure on the power set of a topological space is called a \textit{deviability}.)
If $\nu$ is tight, the collection $\call{F}$ can replace $\call{H}$ in Proposition~\ref{prop:k}.  

A \textit{semilattice} is a poset in which every pair $\{s, t\}$ has a least upper bound $s \oplus t$. A \textit{continuous semilattice} is a semilattice which is also a domain. For the following proposition we need to recall that, by \cite[Theorem~III-2.11]{Gierz03}, every continuous semilattice $L$ is \textit{join-continuous} in the sense that, for every $t \in L$ and every filter $F$, 
$
t \oplus \bigwedge F = \bigwedge (t \oplus F). 
$

\begin{proposition}
If $L$ is a continuous semilattice, and $\nu$ is a tight $L$-valued maxitive measure on $\call{E}$, then $\nu$ preserves filtered intersections of elements of $\call{F}$, i.e.\ 
$$
\bigwedge_{j\in J} \nu^*(F_j) = \nu^*(\bigcap_{j \in J} F_j), 
$$
for every filtered family $(F_j)_{j\in J}$ of elements of $\call{F}$ such that $\bigcap_{j \in J} F_j \in \call{F}$. 
\end{proposition}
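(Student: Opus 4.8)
The plan is to reduce the statement for $\call{F}$ to the already-established statement for $\call{H}$ (Proposition~\ref{prop:k}), using tightness to control the discrepancy between a closed set and its compact truncations, and join-continuity of $L$ to pass to the limit. Throughout, write $F=\bigcap_{j\in J}F_j\in\call{F}$ and $m=\bigwedge_{j\in J}\nu^*(F_j)$. Since $\nu^*$ is order-preserving and $F\subseteq F_j$, the value $\nu^*(F)$ is a lower bound of $\{\nu^*(F_j):j\in J\}$, so $\nu^*(F)\leq m$; the whole difficulty is the reverse inequality $m\leq\nu^*(F)$.

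First I would truncate by an arbitrary $H\in\call{H}$. The key observation is that $\call{H}$ is stable under the operations needed here: if $A\in\call{F}$ and $H\in\call{H}$ then $A\cap H$ is a closed subset of the compact set $H$, hence compact, and $(A\cap H)^c=A^c\cup H^c\in\call{E}$ because $\call{E}$ is closed under finite unions, while $A\cap H\in\call{K}\subseteq\call{E}^*$; thus $A\cap H\in\call{H}$. Applying this to each $F_j$ and to $F$, the family $(F_j\cap H)_{j\in J}$ is a filtered family of elements of $\call{H}$ with intersection $F\cap H\in\call{H}$, so Proposition~\ref{prop:k} yields $\bigwedge_{j\in J}\nu^*(F_j\cap H)=\nu^*(F\cap H)$.

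Next I would bound $\nu^*(F_j)$ by its truncation plus an error term. From $F_j\subseteq(F_j\cap H)\cup H^c$, with $(F_j\cap H)\cup H^c\in\call{E}^*$, monotonicity and maxitivity of $\nu^*$ give $\nu^*(F_j)\leq\nu^*(F_j\cap H)\oplus\nu(H^c)$ (using $\nu^*(H^c)=\nu(H^c)$ since $H^c\in\call{E}$). The family $\{\nu^*(F_j\cap H):j\in J\}$ is filtered, so join-continuity of the continuous semilattice $L$ lets me pull the infimum over $j$ through $\oplus$: $m\leq\bigwedge_{j\in J}\big(\nu^*(F_j\cap H)\oplus\nu(H^c)\big)=\big(\bigwedge_{j\in J}\nu^*(F_j\cap H)\big)\oplus\nu(H^c)=\nu^*(F\cap H)\oplus\nu(H^c)\leq\nu^*(F)\oplus\nu(H^c)$, the last step because $F\cap H\subseteq F$. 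Thus $m\leq\nu^*(F)\oplus\nu(H^c)$ for every $H\in\call{H}$.

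It remains to take the infimum over $H\in\call{H}$ and invoke tightness, $\bigwedge_{H\in\call{H}}\nu(H^c)=0$, together with join-continuity once more, to obtain $m\leq\bigwedge_{H\in\call{H}}\big(\nu^*(F)\oplus\nu(H^c)\big)=\nu^*(F)\oplus\bigwedge_{H\in\call{H}}\nu(H^c)=\nu^*(F)$. I expect this last passage to be the main obstacle, and the only place where more than a formal manipulation is required: join-continuity distributes $\oplus$ only over filtered infima (equivalently, over filters), so to apply it here I must know that $\{\nu(H^c):H\in\call{H}\}$ is downward directed. This is exactly the subtle point, since $\call{H}$ is stable under finite intersections --- because $(H_1\cap H_2)^c=H_1^c\cup H_2^c\in\call{E}$ --- but not, in general, under finite unions, as $\call{E}$ is not assumed stable under finite intersections; and it is enlarging $H$ (not shrinking it) that decreases $\nu(H^c)$. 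I would therefore verify directedness of the error family directly from tightness, or equivalently phrase the way-above extraction through the filter generated by $\{\nu(H^c):H\in\call{H}\}$ and the filter-based definition of $\ssup$, which is where the continuity hypothesis on $L$ is genuinely used.
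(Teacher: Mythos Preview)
Your proposal follows essentially the same route as the paper's proof: fix $H\in\call{H}$, observe that $F_j\cap H,\,F\cap H\in\call{H}$ so Proposition~\ref{prop:k} applies, bound $\nu^*(F_j)\leqslant\nu^*(F_j\cap H)\oplus\nu(H^c)$, pass to the infimum over $j$ using join-continuity, and then take the infimum over $H$ using tightness and join-continuity again. The paper's argument is the same, only terser: it asserts $F_j\cap H,\,F\cap H\in\call{H}$ without the verification you supply, and it dispatches the final step with the single sentence ``The tightness of $\nu$ and the join-continuity of $L$ imply $m\leqslant\nu^*(F)$.''

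The subtlety you flag in your last paragraph --- that join-continuity is stated only for filters, whereas $\{\nu(H^c):H\in\call{H}\}$ need not be downward directed because $\call{E}$ is not assumed closed under finite intersections --- is not addressed in the paper's proof either; the paper simply invokes join-continuity at that point without comment. So your concern is a genuine point of rigor rather than a divergence from the paper's approach.
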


\begin{proof}
Fix some $H \in\call{H}$, and let $F = \bigcap_{j \in J} F_j$. Then $F_j \cap H$ and $F \cap H$ belong to $\call{H}$, hence $\bigwedge_j \nu^*(F_j \cap H) = \nu^*(F \cap H)$ by Proposition~\ref{prop:k}. Pick some lower bound $m$ of the set $\{ \nu^*(F_j) : j \in J \}$. Thanks to the join-continuity of $L$, $m \leqslant \bigwedge_j (\nu^*(F_j \cap H) \oplus \nu(H^c)) = \nu^*(F \cap H) \oplus \nu(H^c) \leqslant \nu^*(F) \oplus \nu(H^c)$. The tightness of $\nu$ and the join-continuity of $L$ imply $m \leqslant \nu^*(F)$, and the result is proved. 
\end{proof}

\section{Decomposition of Maxitive Measures}\label{decompose}

%We show how to decompose a maxitive measure into a regular part and a residual part. 
Here $\call{E}$ is again a paving on $E$. A poset is a \textit{lattice} if every finite subset has a supremum and an infimum. A lattice is \textit{distributive} if finite infima distribute over finite suprema, and \textit{locally complete} if every upper bounded subset has a supremum. A continuous locally complete lattice is a \textit{locally continuous lattice}. A locally continuous lattice which is also distributive is a \textit{locally continuous frame}. Note that every locally continuous frame is a domain. 
Again $\reels_+$, $\overline{\reels}_+$, and $[0, 1]$ are examples of locally continuous frames. 

From Theorem~\ref{general}, the following definition is natural:
\begin{definition}\label{partreg}
Assume that $L$ is a locally continuous lattice, and let $\nu$ be an $L$-valued maxitive measure on $\call{E}$. The \textit{regular part} of $\nu$ is the map defined on $\call{E}$ by
\begin{eqnarray*}
\lfloor \nu \rfloor(G) = \bigoplus_{K \in\call{K}, K\subset G}  \nu^*(K).
\end{eqnarray*} 
\end{definition} 

The regular part of $\nu$ is a completely (or regular) maxitive measure on $\call{E}$ with density $c^*$. This is the greatest completely maxitive measure lower than $\nu$ on $\call{E}$. Moreover, $\lfloor \nu \rfloor^*$ and $\nu^*$ coincide on $\call{K}$, hence $\lfloor\lfloor \nu \rfloor\rfloor = \lfloor \nu \rfloor$. 

The following theorem states the existence of a \textit{residual part} $\bot\nu$ of a maxitive measure $\nu$. 

\begin{theorem}\label{prop:singpart}
Assume that $L$ is a locally continuous frame, and let $\nu$ be an $L$-valued maxitive measure on $\call{E}$. There exists a smallest maxitive measure $\bot\nu$ on $\call{E}$, called the \textit{residual part} of $\nu$, such that the decomposition 
$$
\nu = \lfloor \nu \rfloor \oplus \bot\nu
$$
holds. Moreover, $\bot\nu$ coincides with its own residual part, i.e.\ $\bot(\bot\nu) = \bot\nu$, and the residual part of the regular part of $\nu$ equals $0$, i.e.\ $\bot \lfloor \nu \rfloor = 0$. 
\end{theorem}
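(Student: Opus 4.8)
The plan is to realise $\bot\nu$ as the least element of the set
\[
\mathcal R=\{\rho:\rho\text{ is a maxitive measure on }\call E\text{ and }\nu=\lfloor\nu\rfloor\oplus\rho\}.
\]
This set is nonempty, since $\lfloor\nu\rfloor\leqslant\nu$ forces $\nu\in\mathcal R$, and it encodes exactly what the statement asks: any $\rho\in\mathcal R$ yields the announced decomposition, and the smallest such $\rho$ will be $\bot\nu$. Because $L$ is locally complete, the pointwise infimum $\bot\nu(G):=\bigwedge\{\rho(G):\rho\in\mathcal R\}$ exists for every $G\in\call E$; the whole difficulty is to check that this infimum again belongs to $\mathcal R$.

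The first ingredient I would isolate is a stability lemma: the pointwise infimum of a down-directed family $(\rho_i)$ of maxitive measures is again maxitive, and if every $\rho_i$ lies in $\mathcal R$ then so does $\bigwedge_i\rho_i$. Both assertions rest on the join-continuity of $L$ recalled earlier, namely $t\oplus\bigwedge F=\bigwedge(t\oplus F)$ for filtered $F$: maxitivity follows by writing $\bigwedge_i(\rho_i(G_1)\oplus\rho_i(G_2))=(\bigwedge_i\rho_i(G_1))\oplus(\bigwedge_j\rho_j(G_2))$ and using directedness to match the diagonal family with the family of all pairs, while membership in $\mathcal R$ follows from $\lfloor\nu\rfloor\oplus\bigwedge_i\rho_i=\bigwedge_i(\lfloor\nu\rfloor\oplus\rho_i)=\nu$. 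Granting this, it remains only to prove that $\mathcal R$ is down-directed, for then $\bot\nu=\bigwedge\mathcal R$ lies in $\mathcal R$ and is plainly its least element.

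The main obstacle is precisely this directedness. Given $\rho_1,\rho_2\in\mathcal R$, their pointwise meet $h=\rho_1\wedge\rho_2$ still satisfies $\lfloor\nu\rfloor\oplus h=\nu$ by the distributivity of the frame $L$, since $\lfloor\nu\rfloor\oplus(\rho_1\wedge\rho_2)=(\lfloor\nu\rfloor\oplus\rho_1)\wedge(\lfloor\nu\rfloor\oplus\rho_2)=\nu$; but $h$ is in general not maxitive, so it is not yet a common lower bound inside $\mathcal R$. I would repair this by replacing $h$ with $\rho_3$, the greatest maxitive measure below $h$, which exists because any family of maxitive measures bounded above by $h$ has a pointwise supremum (local completeness) that is again maxitive (distribution of $\oplus$ over suprema). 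The crux is then to show $\rho_3\in\mathcal R$, that is $\lfloor\nu\rfloor\oplus\rho_3=\nu$. The inequality $\leqslant$ is immediate from $\rho_3\leqslant h$. For $\geqslant$ the engine is again distributivity: for any maxitive $\psi\leqslant\nu=\lfloor\nu\rfloor\oplus h$ one has, pointwise, $\psi\leqslant(\lfloor\nu\rfloor\oplus\psi)\wedge(\lfloor\nu\rfloor\oplus h)=\lfloor\nu\rfloor\oplus(\psi\wedge h)$. The delicate step, and the heart of the theorem, is to upgrade the non-maxitive $\psi\wedge h$ to the maxitive minorant $\rho_3$, so as to obtain $\lfloor\nu\rfloor\oplus\rho_3=\lfloor\nu\rfloor\oplus h=\nu$; in other words, that adding the maxitive measure $\lfloor\nu\rfloor$ commutes with passing to the greatest maxitive minorant. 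This is exactly where the distributivity of $L$ and the regularity (maxitivity) of $\lfloor\nu\rfloor$ must be combined, and where the fact that $\nu$ and $\lfloor\nu\rfloor$ are both maxitive prevents the appearance of two incomparable minimal solutions.

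Once existence and minimality of $\bot\nu$ are secured, the last two assertions are short. Since $\lfloor\lfloor\nu\rfloor\rfloor=\lfloor\nu\rfloor$ (noted after Definition~\ref{partreg}), the family $\mathcal R$ attached to $\lfloor\nu\rfloor$ contains the null measure, which is its least element, so $\bot\lfloor\nu\rfloor=0$. For the idempotency $\bot(\bot\nu)=\bot\nu$, observe that $\bot\nu\leqslant\nu$ and that $\lfloor\cdot\rfloor$ is order-preserving, hence $\lfloor\bot\nu\rfloor\leqslant\lfloor\nu\rfloor$; substituting the decomposition $\bot\nu=\lfloor\bot\nu\rfloor\oplus\bot(\bot\nu)$ into $\nu=\lfloor\nu\rfloor\oplus\bot\nu$ and absorbing $\lfloor\bot\nu\rfloor$ into $\lfloor\nu\rfloor$ gives $\nu=\lfloor\nu\rfloor\oplus\bot(\bot\nu)$, so $\bot(\bot\nu)\in\mathcal R$. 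Minimality of $\bot\nu$ then forces $\bot\nu\leqslant\bot(\bot\nu)\leqslant\bot\nu$, whence equality.
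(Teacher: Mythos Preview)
Your proposal has a genuine gap precisely at the point you label ``the delicate step, and the heart of the theorem''. You reduce existence of $\bot\nu$ to the down-directedness of $\mathcal R$; for $\rho_1,\rho_2\in\mathcal R$ you form $h=\rho_1\wedge\rho_2$ and let $\rho_3$ be the greatest maxitive minorant of $h$, but you never establish $\lfloor\nu\rfloor\oplus\rho_3\geqslant\nu$. The sentence beginning ``for any maxitive $\psi\leqslant\nu$\dots'' only yields $\psi\leqslant\lfloor\nu\rfloor\oplus(\psi\wedge h)$, which goes in the wrong direction, and the claim that ``adding the maxitive measure $\lfloor\nu\rfloor$ commutes with passing to the greatest maxitive minorant'' is asserted, not proved. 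In fact the easy inequality runs the other way: $\lfloor\nu\rfloor\oplus\rho_3$ is maxitive and $\leqslant\lfloor\nu\rfloor\oplus h$, hence $\lfloor\nu\rfloor\oplus\rho_3\leqslant M(\lfloor\nu\rfloor\oplus h)=\nu$, which is what you already knew. Nothing you have written forces the reverse inequality, and without it neither down-directedness of $\mathcal R$ nor membership of $\bigwedge\mathcal R$ in $\mathcal R$ follows. Since this is exactly the existence assertion of the theorem, the argument is incomplete.

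The paper sidesteps this difficulty by constructing $\bot\nu$ explicitly rather than as an abstract infimum: it sets $\call I_t=\{G\in\call E:\forall H\subset G,\ \nu(H)\leqslant\lfloor\nu\rfloor(H)\oplus t\}$ and $\bot\nu(G)=\bigwedge\{t:G\in\call I_t\}$, then invokes the ideal representation (Proposition~\ref{nguyen bouchon-meunier2}) to get maxitivity, and checks $\nu=\lfloor\nu\rfloor\oplus\bot\nu$ directly using continuity of $L$. Minimality then follows immediately, since for any $\rho\in\mathcal R$ and any $H\subset G$ one has $\nu(H)=\lfloor\nu\rfloor(H)\oplus\rho(H)\leqslant\lfloor\nu\rfloor(H)\oplus\rho(G)$, so $G\in\call I_{\rho(G)}$ and $\bot\nu(G)\leqslant\rho(G)$. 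Your treatment of the last two assertions ($\bot\lfloor\nu\rfloor=0$ and $\bot(\bot\nu)=\bot\nu$) is correct and matches the paper's argument essentially verbatim, but these ride on the existence of $\bot\nu$, which you have not secured.
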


\begin{proof}
We give a constructive proof for the existence of $\bot\nu$. 
Let $\bot\nu(G) = \bigwedge\{ t \in L : G \in \call{I}_t \}$, where $\call{I}_t := \{ G \in \call{E} : \forall H \subset G, \nu(H) \leqslant \lfloor \nu \rfloor(H) \oplus t\}$. Then $(\call{I}_t)_{t \in L}$ is a nondecreasing family of ideals of $\call{E}$, and distributivity implies that $\{ t \in L : G \in \call{I}_t \}$ is a filter, %hence an $\F$-set, 
for every $G \in \call{E}$. From Proposition~\ref{nguyen bouchon-meunier2}, we deduce that $\bot\nu$ is a maxitive measure. %The rest of the proof is analog to the proof of Theorem~\ref{distr}. 

Since $\nu(G) \in \call{I}_t$ for $t = \nu(G)$, we have $\nu(G) \geqslant \bot\nu(G)$, thus $\nu \geqslant \lfloor \nu \rfloor \oplus \bot\nu$. Let us prove that the reserve inequality holds. Let $G \in \call{E}$, let $m$ be an upper bound of the pair $\{ \lfloor \nu \rfloor (G), \bot\nu (G) \}$, and let $u \ssup m$. There is some $t \in L$, $\nu(G) \leqslant \lfloor \nu \rfloor(G) \oplus t$, such that $u \geqslant t$. Hence, $u \geqslant \nu(G)$, so by continuity of $L$, $m \geqslant \nu(G)$, and the reserve inequality is proved. 

To show that $\bot(\bot\nu) = \bot\nu$, first notice that $\bot(\bot\nu) \leqslant \bot\nu$, since $\bot\nu = \lfloor \bot\nu \rfloor \oplus \bot(\bot\nu)$. Second, $\lfloor \bot\nu \rfloor$ has a density and is lower than $\nu$, hence is lower than $\lfloor \nu \rfloor$. Thus, $\nu = \lfloor \nu \rfloor \oplus \bot\nu = \lfloor \nu \rfloor \oplus \lfloor \bot\nu \rfloor \oplus \bot(\bot\nu) = \lfloor \nu \rfloor \oplus \bot(\bot\nu)$. This implies that $\bot\nu \leqslant \bot(\bot\nu)$.  The fact that $\bot\lfloor \nu \rfloor = 0$ is straightforward. 
\end{proof}

See also \cite{Poncet09} for a proof relying on purely order-theoretical properties of the set of maxitive measures. As a consequence of the previous result we have the following corollary. 

\begin{corollary}
Assume that $L$ is a locally continuous frame, and let $\nu$ be an $L$-valued maxitive measure on $\call{E}$. Then $\nu$ has a density if and only if $\nu = \lfloor \nu \rfloor$ if and only if $\bot\nu = 0$. 
\end{corollary}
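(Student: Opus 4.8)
The plan is to derive the corollary directly from Theorem~\ref{general} and Theorem~\ref{prop:singpart}, proving the two equivalences $(\nu \text{ has a density}) \Leftrightarrow (\nu = \lfloor\nu\rfloor)$ and $(\nu = \lfloor\nu\rfloor) \Leftrightarrow (\bot\nu = 0)$ in turn. First I would record that both theorems are available here: a locally continuous frame is a domain, so Theorem~\ref{general} applies, and it is moreover distributive, so Theorem~\ref{prop:singpart} applies as well. No further hypotheses are needed.

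For the first equivalence, the essential observation is that the defining formula of the regular part in Definition~\ref{partreg}, namely $\lfloor\nu\rfloor(G) = \bigoplus_{K\in\call{K},\,K\subset G}\nu^*(K)$, is verbatim the right-hand side of the inner-continuity condition~(\ref{gene1}) of Theorem~\ref{general}. Consequently the equality $\nu = \lfloor\nu\rfloor$, read pointwise over $G\in\call{E}$, is nothing but the assertion that $\nu$ is inner-continuous. Invoking the equivalence $(\ref{gene1})\Leftrightarrow(\ref{gene2})$ of Theorem~\ref{general} then gives at once that $\nu$ has a density if and only if $\nu = \lfloor\nu\rfloor$, with no additional computation.

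For the second equivalence I would use Theorem~\ref{prop:singpart}. If $\bot\nu = 0$, the decomposition $\nu = \lfloor\nu\rfloor\oplus\bot\nu$ collapses to $\nu = \lfloor\nu\rfloor\oplus 0 = \lfloor\nu\rfloor$, since the supremum of maxitive measures is computed pointwise and the bottom element $0$ of $L$ is neutral for $\oplus$. Conversely, if $\nu = \lfloor\nu\rfloor$, then $\bot\nu = \bot\lfloor\nu\rfloor$, which equals $0$ by the last assertion of Theorem~\ref{prop:singpart}. Chaining the two equivalences yields the three-way equivalence claimed.

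I do not expect a genuine obstacle: all the substantive content has already been packaged into Theorems~\ref{general} and~\ref{prop:singpart}, and this corollary is essentially a matter of unwinding definitions. The only points demanding care are bookkeeping ones, namely confirming that ``locally continuous frame'' licenses both theorems, and verifying that the suprema defining $\lfloor\nu\rfloor$ and the decomposition $\nu=\lfloor\nu\rfloor\oplus\bot\nu$ are taken pointwise, so that adjoining the zero measure via $\oplus$ acts as the identity. Once these are in place the statement follows immediately.
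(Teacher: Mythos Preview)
Your proposal is correct and follows exactly the route the paper intends: the corollary is stated without proof, as an immediate consequence of Theorem~\ref{prop:singpart} (together with Theorem~\ref{general} and the remarks following Definition~\ref{partreg}), and your argument simply makes those implications explicit. There is nothing to add.
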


It is worth summarizing calculus rules that apply to operators $\lfloor \cdot \rfloor$, $\bot \cdot$, and $(\cdot)^*$: 

\begin{proposition}
Assume that $L$ is a locally continuous frame, and let $\nu, \tau \in \call{M}$. Then the following properties hold:
\begin{enumerate}
	\item $\nu = \lfloor \nu \rfloor \oplus \bot\nu$, 
	\item $\nu = \lfloor \nu \rfloor \Leftrightarrow \bot\nu = 0$, 
	\item $\lfloor \lfloor \nu \rfloor \rfloor = \lfloor \nu \rfloor$, 
	\item $\lfloor \nu \oplus \tau \rfloor = \lfloor \nu \rfloor \oplus \lfloor \tau \rfloor$, 
	\item $(\nu \oplus \tau)^* = \nu^* \oplus \tau^*$, 
	\item $\bot(\bot\nu) = \bot\nu$, 
	\item $\bot(\nu \oplus \tau) \leqslant \bot \nu \oplus \bot \tau$, 
	\item $\bot \lfloor \nu \rfloor = 0$. 
\end{enumerate}
Moreover, if $\call{E}$ is a topology, we have 
\begin{enumerate}
	\item $\lfloor \nu^* \rfloor = \lfloor \nu \rfloor^*$, 
	\item $\bot(\nu^*) \leqslant (\bot \nu)^*$. 
\end{enumerate}
\end{proposition}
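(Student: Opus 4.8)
The plan is to prove the ten calculus rules one at a time, reusing the structural results already established. Several items are restatements of Theorem~\ref{prop:singpart}: namely (1), (6), and (8) are exactly its conclusions, and (2) is the preceding corollary, so for these I would simply cite those results. Item (3) was noted immediately after Definition~\ref{partreg}, following from the fact that $\lfloor\nu\rfloor^*$ and $\nu^*$ agree on $\call{K}$. The genuinely new content lies in the homomorphism-type rules (4), (5), (7), (9), (10), and my approach for each is to compare the two sides as maxitive measures by testing them against the defining suprema and infima, exploiting distributivity of the locally continuous frame $L$.

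For item (5), I would unfold the extension formula (\ref{extension}): for $A\in\call{E}^*$ one has $(\nu\oplus\tau)^*(A)=\bigwedge_{G\supset A}(\nu(G)\oplus\tau(G))$, and I would show this equals $\big(\bigwedge_{G\supset A}\nu(G)\big)\oplus\big(\bigwedge_{G\supset A}\tau(G)\big)$ using join-continuity, which holds in a continuous semilattice by the cited \cite[Theorem~III-2.11]{Gierz03}; the filteredness of $\{G\in\call{E}:G\supset A\}$ for $A\in\call{E}^*$ is exactly what lets me apply join-continuity to the filtered infimum. Item (4) then follows quickly: since $\lfloor\nu\rfloor(G)=\bigoplus_{K\subset G,K\in\call{K}}\nu^*(K)$, I would use (5) to get $(\nu\oplus\tau)^*(K)=\nu^*(K)\oplus\tau^*(K)$ on each compact $K$, and then distribute the supremum over $K$ across the join $\oplus$, which is legitimate because suprema commute with binary joins in any lattice. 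For item (7), the inequality $\bot(\nu\oplus\tau)\leqslant\bot\nu\oplus\bot\tau$, I would combine (1) and (4): writing $\nu\oplus\tau=\lfloor\nu\oplus\tau\rfloor\oplus(\bot\nu\oplus\bot\tau)=\lfloor\nu\oplus\tau\rfloor\oplus\bot\nu\oplus\bot\tau$ exhibits $\bot\nu\oplus\bot\tau$ as \emph{a} maxitive measure completing the decomposition of $\nu\oplus\tau$, and minimality of $\bot(\nu\oplus\tau)$ from Theorem~\ref{prop:singpart} then forces the inequality.

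For the two rules (9) and (10) that require $\call{E}$ to be a topology, the extra hypothesis is what makes $\call{K}\subset\call{E}^*$ behave well under the extension, so that compact sets sit inside open sets in the expected way. For (9), $\lfloor\nu^*\rfloor=\lfloor\nu\rfloor^*$, I would show both sides agree on $\call{E}^*$ by checking they assign the same value $\bigoplus_{K\subset A,K\in\call{K}}\nu^*(K)$ to each $A$; the point is that applying $\lfloor\cdot\rfloor$ to the already-extended $\nu^*$ gives the same compact-exhaustion supremum, using that $\nu^*$ restricted to $\call{K}$ equals $\lfloor\nu\rfloor^*$ restricted to $\call{K}$ as recorded after Definition~\ref{partreg}. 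Item (10) is then handled by the same minimality argument as (7), now applied to $\nu^*$ and transported through the star operation via (9).

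The main obstacle I anticipate is the careful handling of join-continuity in item (5) and, more subtly, ensuring that all the suprema and infima I manipulate actually exist in the locally continuous frame $L$ and may be freely interchanged. Distributing a supremum over compact sets through a binary join (in (4)) and commuting a filtered infimum with $\oplus$ (in (5)) are the two identities that do the real work, and both rest on $L$ being a locally continuous \emph{frame} rather than merely a domain — distributivity and local completeness are precisely what guarantee the relevant suprema exist and distribute. I would therefore state the join-continuity identity explicitly at the outset and verify the existence of each supremum before invoking these exchanges, since that bookkeeping, rather than any single clever step, is where the proof could go wrong.
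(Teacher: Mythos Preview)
The paper states this proposition without proof: it is presented as a summary of calculus rules, several of which (items (1), (2), (3), (6), (8)) are immediate from Theorem~\ref{prop:singpart}, its corollary, and the remarks following Definition~\ref{partreg}, while the remaining items are left to the reader. There is therefore no proof in the paper to compare your proposal against.

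That said, your plan is sound and would supply what the paper omits. Your derivation of (5) via join-continuity is correct once you exploit the filteredness of $\{G\in\call{E}:G\supset A\}$ to find, for any $G_1,G_2\supset A$, some $G_3\subset G_1\cap G_2$ with $G_3\supset A$, which gives $(\nu\oplus\tau)^*(A)\leqslant\nu(G_1)\oplus\tau(G_2)$ and then lets you take iterated filtered infima; (4) and (7) follow from (5) and minimality exactly as you say. For (9) and (10) your sketch is a bit loose: you should make explicit which prepaving the operators $\lfloor\cdot\rfloor$ and $(\cdot)^*$ refer to on each side (on the left $\nu^*$ lives on $\call{E}^*=2^E$, on the right $\lfloor\nu\rfloor$ lives on $\call{E}$ and is then extended), and confirm that the compact sets $\call{K}$ used in both regular parts are those of the topology $\call{E}$. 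Once that bookkeeping is fixed, the identity $(\nu^*)^*|_{\call{K}}=\nu^*|_{\call{K}}$ and the fact that $\lfloor\nu\rfloor^*$ and $\nu^*$ agree on $\call{K}$ give (9), and (10) follows by minimality as you indicate.
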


Among the previous list one could worry about some desirable property missing. One naturally expects that the regular part of a residual part be equal to zero ($\lfloor \bot\nu \rfloor = 0$), or, in other words, %i.e.\ that the residual part be null on compact subsets, or
that the residual part to be null on compact subsets, or at least on $\call{H}$. However, for the latter to be realized we need some additional conditions on $\call{E}$, namely that $\call{E}$ be closed under the formation of complements ($\call{E}$ is then called a \textit{Boolean algebra}). Hence we say that a maxitive measure $\nu$ is \textit{singular} if $\nu^*(H) = 0$ for all $H \in \call{H}$. 

\begin{theorem}\label{boolean}
Assume that $\call{E}$ is a Boolean algebra on $E$ and $L$ is a locally continuous frame. Let $\nu$ be an $L$-valued maxitive measure on $\call{E}$. Then the map defined on $\call{E}$ by 
\begin{equation}\label{singpart}
\nu_s(G) = \bigwedge_{H \in \call{H}, H \subset G} \nu(G \setminus H) 
\end{equation}
is the greatest $L$-valued singular maxitive measure such that the decomposition $\nu = \lfloor \nu \rfloor \oplus \nu_s$ holds. 
\end{theorem}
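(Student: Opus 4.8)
The plan is to verify, in turn, that the formula $\nu_s$ in Equation~(\ref{singpart}) defines a maxitive measure, that it is singular, that the decomposition $\nu = \lfloor \nu \rfloor \oplus \nu_s$ holds, and finally that $\nu_s$ is the greatest singular measure with this property. The Boolean algebra hypothesis is essential throughout: since $\call{E}$ is closed under complements, whenever $H \in \call{H} \subset \call{E}^*$ has $H^c \in \call{E}$ and $H \subset G \in \call{E}$, the set difference $G \setminus H = G \cap H^c$ lies in $\call{E}$, so the expression $\nu(G \setminus H)$ makes sense.

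First I would establish that $\nu_s$ is maxitive. The natural route is to exhibit $\nu_s$ in the ideal form of Proposition~\ref{nguyen bouchon-meunier2}: I would set $\call{I}_t = \{ G \in \call{E} : \forall H \in \call{H}, H \subset G \Rightarrow \nu(G \setminus H) \leqslant t \}$ and check that $(\call{I}_t)_{t \in L}$ is a nondecreasing family of ideals with $\nu_s(G) = \bigwedge \{ t : G \in \call{I}_t \}$, where distributivity of $L$ guarantees that $\{ t : G \in \call{I}_t \}$ is a filter. Verifying that each $\call{I}_t$ is downward-closed and stable under finite unions is the routine but genuinely Boolean part: for $G = G_1 \cup G_2$ and $H \in \call{H}$ with $H \subset G$, one writes $G \setminus H$ in terms of $G_1 \setminus H$ and $G_2 \setminus H$ and uses maxitivity of $\nu$ together with the fact that $G_i \cap H \in \call{H}$. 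Singularity of $\nu_s$, namely $\nu_s^*(H_0) = 0$ for $H_0 \in \call{H}$, should follow by taking $H = H_0$ itself in the infimum defining $\nu_s$ on suitable approximating sets $G$ and exploiting that the complement manoeuvre lets us excise $H_0$ entirely.

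Next I would prove the decomposition. That $\nu \geqslant \lfloor \nu \rfloor \oplus \nu_s$ should come from the bound $\nu(G) \geqslant \nu(G \setminus H)$ combined with $\nu(G) \geqslant \lfloor \nu \rfloor(G)$, using distributivity to combine the two. For the reverse inequality $\nu \leqslant \lfloor \nu \rfloor \oplus \nu_s$, I would argue pointwise through the way-above relation as in the proof of Theorem~\ref{prop:singpart}: fix $G$, take an upper bound $m$ of $\{ \lfloor \nu \rfloor(G), \nu_s(G) \}$ and $u \ssup m$, produce from $\nu_s(G) \leqslant m$ a compact $H \in \call{H}$ with $H \subset G$ and $\nu(G \setminus H) \leqslant u$, then control $\nu$ on the compact core $H$ via $\lfloor \nu \rfloor$ using inner regularity (Fact~1 of Theorem~\ref{general} identifies $\nu^*(H)$ with $\lfloor\nu\rfloor^*(H)$), and finally recombine $G = (G \setminus H) \cup H'$ for an appropriate $H' \in \call{E}$ refining $H$ by the paving property, invoking maxitivity and distributivity before passing to the limit with continuity of $L$.

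Finally, maximality: given any singular maxitive measure $\tau$ with $\nu = \lfloor \nu \rfloor \oplus \tau$, I would show $\tau \geqslant \nu_s$ by fixing $G \in \call{E}$ and $H \in \call{H}$ with $H \subset G$, then comparing $\nu(G \setminus H)$ against $\lfloor\nu\rfloor(G\setminus H) \oplus \tau(G \setminus H) \leqslant \lfloor\nu\rfloor(G) \oplus \tau(G)$ and using singularity of $\tau$ to discard the contribution on $H$; taking the infimum over $H$ then yields $\nu_s(G) \leqslant \tau(G)$ up to a $\lfloor\nu\rfloor$-term that is absorbed. I expect the main obstacle to be the reverse inequality in the decomposition, precisely the step that reconciles the excised compact part $H$ with the regular part $\lfloor \nu \rfloor$: one must show that removing $H$ loses nothing beyond what $\lfloor \nu \rfloor$ already captures, and this requires carefully coupling inner regularity on $\call{K}$ (Theorem~\ref{general}) with the way-above interpolation and join-continuity that the locally continuous frame structure of $L$ provides.
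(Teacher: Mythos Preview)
Your overall plan is right, but two of the four steps contain genuine errors.

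\textbf{The ideal family is misdefined.} Your $\call{I}_t = \{ G \in \call{E} : \forall H \in \call{H},\ H \subset G \Rightarrow \nu(G \setminus H) \leqslant t \}$ does not represent $\nu_s$. Since $\emptyset \in \call{H}$, the condition forces $\nu(G) \leqslant t$; conversely $\nu(G) \leqslant t$ already implies $\nu(G\setminus H) \leqslant t$ for every $H$. Hence $\call{I}_t = \{G : \nu(G) \leqslant t\}$ and $\bigwedge\{t : G \in \call{I}_t\} = \nu(G)$, not $\nu_s(G)$. The quantifier must be existential: with $\call{I}_t = \{G : \exists H \in \call{H},\ H \subset G,\ \nu(G\setminus H) \leqslant t\}$ one does recover $\nu_s$, and the ideal axioms follow from $H_1 \cup H_2 \in \call{H}$ and $G' \cap H \in \call{H}$. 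The paper avoids Proposition~\ref{nguyen bouchon-meunier2} altogether and checks maxitivity directly: $\nu_s(G) \oplus \nu_s(G') \geqslant \nu_s(G\cup G')$ from $(G\cup G')\setminus(H\cup H') \subset (G\setminus H) \cup (G'\setminus H')$, and monotonicity from $G\cap H' \in \call{H}$.

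\textbf{Maximality is argued in the wrong direction.} Since $\nu_s$ is claimed to be the \emph{greatest} singular $\tau$ with $\nu = \lfloor\nu\rfloor \oplus \tau$, you must show $\tau \leqslant \nu_s$, not $\tau \geqslant \nu_s$. Your sketch yields only $\nu_s(G) \leqslant \lfloor\nu\rfloor(G) \oplus \tau(G)$, and the $\lfloor\nu\rfloor$-term cannot be ``absorbed''. The correct argument (the paper's) is short: for $H \in \call{H}$ with $H \subset G$, singularity gives $\tau(H)=0$, so $\tau(G) = \tau(G\setminus H)\oplus\tau(H) = \tau(G\setminus H) \leqslant \nu(G\setminus H)$; taking the infimum over $H$ gives $\tau(G) \leqslant \nu_s(G)$.

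For the decomposition $\nu \leqslant \lfloor\nu\rfloor \oplus \nu_s$, your way-above route works but is heavier than needed. Since $\call{E}$ is a Boolean algebra, $H$ already lies in $\call{E}$, so $\nu(G) = \nu(H) \oplus \nu(G\setminus H) \leqslant \lfloor\nu\rfloor(G) \oplus \nu(G\setminus H)$ for each $H \in \call{H}$ with $H\subset G$; distributivity of finite joins over arbitrary meets in $L$ then gives $\nu(G) \leqslant \lfloor\nu\rfloor(G) \oplus \nu_s(G)$ directly, with no interpolation step.
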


\begin{remark}
By Theorem~\ref{prop:singpart}, $\bot\nu$ is lower than $\nu_s$, hence singular when $\call{E}$ is a Boolean algebra. 
\end{remark}

\begin{proof}
We prove that $\nu_s$ defined by Equation~(\ref{singpart}) is maxitive and that the decomposition $\nu = \lfloor \nu \rfloor \oplus \nu_s$ holds. Let $G\in\call{E}$. If $H \subset G$, $H \in\call{H}$, we have, in view of the distributivity of finite joins with respect to arbitrary meets, $\nu(G) = \nu(H) \oplus \nu(G \setminus H) \leqslant \lfloor \nu \rfloor(G) \oplus \nu(G \setminus H) \leqslant \lfloor \nu \rfloor(G) \oplus \nu_s(G)$. The converse inequality is obvious, hence $\nu = \lfloor \nu \rfloor \oplus \nu_s$. 

Next we show that $\nu_s$ is maxitive. If $G, G' \in \call{E}$, 
\begin{eqnarray*}
\nu_s(G) \oplus \nu_s(G') &=& \bigwedge_{H \subset G, H' \subset G'} \nu(G \setminus H) \oplus \nu(G' \setminus H') \\
&\geqslant& \bigwedge_{H \subset G, H' \subset G'} \nu((G \cup G') \setminus (H \cup H')) \\
&\geqslant& \nu_s(G \cup G') \mbox{ since } H \cup H' \in \call{H}, 
\end{eqnarray*}
and it remains to show that $\nu_s$ is nondecreasing. %, which is precisely the point where we need that $\call{E}$ be a Boolean algebra. 
So let $G, G' \in \call{E'}$ such that $G \subset G'$, and let $H' \subset G'$, $H' \in \call{H}$. Then $\nu(G' \setminus H') \geqslant \nu(G \setminus H') = \nu(G \setminus (G \cap H')) \geqslant \nu_s(G)$, the last inequality coming from the fact that $G \cap H' \in \call{H}$ since $\call{E}$ is stable under complementation. We deduce that $\nu_s(G') \geqslant \nu_s(G)$, and $\nu_s$ is maxitive. 

Also, $\nu_s$ is singular since, for all $H \in \call{H}$, $\nu_s^*(H) = \nu_s(H) = 0$. 

Suppose that $\nu = \lfloor \nu \rfloor \oplus \tau$ for some singular maxitive measure $\tau$, and let $G \in \call{E}$. Then, for all $H \subset G$, $H \in \call{H}$, $\tau(G) = \tau(G \setminus H) \oplus \tau(H) = \tau(G \setminus H)$ by singularity of $\tau$. Hence $\tau(G) \leqslant \nu(G \setminus H)$, and we get $\tau(G) \leqslant \nu_s(G)$ for all $G \in \call{E}$. 
\end{proof}

\begin{corollary}
If $\call{E}$ is a Boolean algebra on $E$ and $L$ is a locally continuous frame, then every $L$-valued tight maxitive measure on $\call{E}$ has a density. 
\end{corollary}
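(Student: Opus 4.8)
The plan is to route everything through Theorem~\ref{boolean}. Since $\call{E}$ is a Boolean algebra and $L$ is a locally continuous frame, that theorem applies and furnishes the decomposition $\nu = \lfloor \nu \rfloor \oplus \nu_s$, where the singular part is given explicitly by $\nu_s(G) = \bigwedge_{H \in \call{H}, H \subset G} \nu(G \setminus H)$. The whole content of the corollary then reduces to showing that tightness forces $\nu_s \equiv 0$: once this is established, the decomposition collapses to $\nu = \lfloor \nu \rfloor \oplus 0 = \lfloor \nu \rfloor$, and the preceding corollary (which identifies the maxitive measures having a density as precisely those with $\nu = \lfloor \nu \rfloor$) finishes the argument. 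Equivalently one could argue through $\bot\nu$, since the remark after Theorem~\ref{boolean} gives $\bot\nu \leqslant \nu_s$; but working directly with the explicit $\nu_s$ is cleaner because its defining formula is visibly aligned with the tightness condition.

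First I would exploit the Boolean algebra hypothesis to note that $E = \emptyset^c \in \call{E}$, so that $\nu_s$ is defined at $G = E$. Evaluating the formula there, and using $E \setminus H = H^c$, yields $\nu_s(E) = \bigwedge_{H \in \call{H}} \nu(H^c)$, which is exactly $0$ by the very definition of tightness. Next I would use that $\nu_s$ is itself a maxitive measure, hence order-preserving: for any $G \in \call{E}$ we have $G \subset E$, so $\nu_s(G) \leqslant \nu_s(E) = 0$. Since $0$ is the bottom of $L$, this gives $\nu_s(G) = 0$ for every $G \in \call{E}$, that is, $\nu_s = 0$.

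I do not expect a genuinely hard step here, since the analytic substance is already packaged into Theorem~\ref{boolean} together with the notion of tightness. The only points deserving care are two places where the Boolean algebra assumption (rather than the weaker paving assumption) is used: the containment $E \in \call{E}$, without which $\nu_s(E)$ is not even defined, and the identification $\nu_s(E) = \bigwedge_{H \in \call{H}} \nu(H^c)$ via $E \setminus H = H^c$. After that, monotonicity of $\nu_s$ is what propagates the vanishing from the single set $E$ to the whole of $\call{E}$, and the earlier corollary converts $\nu = \lfloor \nu \rfloor$ into the existence of a density.
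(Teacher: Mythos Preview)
Your proposal is correct and follows essentially the same path as the paper: compute $\nu_s(E) = \bigwedge_{H \in \call{H}} \nu(H^c) = 0$ from tightness, then use monotonicity to kill the residual/singular part and conclude $\nu = \lfloor\nu\rfloor$. The only cosmetic difference is that the paper routes the vanishing through $\bot\nu$ via the inequality $\bot\nu \leqslant \nu_s$ (the alternative you explicitly mention and set aside), whereas you show $\nu_s = 0$ directly; both arguments rest on the same single computation.
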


\begin{proof}
Let $\nu$ be a tight maxitive measure on $\call{E}$. Since $\bot\nu \leqslant \nu_s$, we have in particular, for all $G \in \call{G}$, $\bot\nu(G) \leqslant \nu_s(E) = \bigwedge_{H \in \call{H}} \nu(H^c) = 0$. This means that the singular part of $\nu$ is null, so that $\nu = \lfloor \nu \rfloor$, and $\nu$ has a cardinal density thanks to Proposition~\ref{general}. 
\end{proof}

\begin{acknowledgements}
I would like to thank Marianne Akian for her valuable comments and suggestions. 
\end{acknowledgements}

\bibliographystyle{plain}
%\bibliography{C:/LocalTex/BIBLIO}

\begin{thebibliography}{99}

\bibitem{Acerbi02}
Emilio Acerbi, Giuseppe Buttazzo, and Francesca Prinari.
\newblock The class of functionals which can be represented by a supremum.
\newblock {\em J. Convex Anal.}, 9(1):225--236, 2002.

\bibitem{Akian95}
Marianne Akian.
\newblock Theory of cost measures: convergence of decision variables.
\newblock Rapport de recherche 2611, INRIA, France, 1995.

\bibitem{Akian99}
Marianne Akian.
\newblock Densities of idempotent measures and large deviations.
\newblock {\em Trans. Amer. Math. Soc.}, 351(11):4515--4543, 1999.

\bibitem{Akian94b}
Marianne Akian, Jean-Pierre Quadrat, and Michel Viot.
\newblock Bellman processes.
\newblock In {\em Proceedings of the 11th International Conference on Analysis
  and Optimization of Systems held at Sophia Antipolis, June 15--17, 1994},
  volume 199 of {\em Lecture Notes in Control and Information Sciences}, pages
  302--311, Berlin, 1994. Springer-Verlag.
\newblock Edited by Guy Cohen and Jean-Pierre Quadrat.

\bibitem{Akian03}
Marianne Akian and Ivan Singer.
\newblock Topologies on lattice ordered groups, separation from closed downward
  sets and conjugations of type {L}au.
\newblock {\em Optimization}, 52(6):629--672, 2003.

\bibitem{Barron00}
Emmanuel~N. Barron, Pierre Cardaliaguet, and Robert~R. Jensen.
\newblock Radon-{N}ikodym theorem in {$L\sp \infty$}.
\newblock {\em Appl. Math. Optim.}, 42(2):103--126, 2000.

\bibitem{Beer87}
Gerald Beer.
\newblock Lattice-semicontinuous mappings and their application.
\newblock {\em Houston J. Math.}, 13(3):303--318, 1987.

\bibitem{Bellalouna92}
Faouzi Bellalouna.
\newblock {\em Un point de vue lin\'{e}aire sur la programmation dynamique.
  D\'{e}tection de ruptures dans le cadre des probl\`{e}mes de fiabilit\'{e}.}
\newblock PhD thesis, Universit\'{e} Paris-IX Dauphine, Paris, France, 1992.

\bibitem{DeCooman97}
Gert de~Cooman.
\newblock Possibility theory. {I}. {T}he measure- and integral-theoretic
  groundwork.
\newblock {\em Internat. J. Gen. Systems}, 25(4):291--323, 1997.

\bibitem{deCooman01}
Gert de~Cooman, Guangquan Zhang, and Etienne~E. Kerre.
\newblock Possibility measures and possibility integrals defined on a complete
  lattice.
\newblock {\em Fuzzy Sets and Systems}, 120(3):459--467, 2001.

\bibitem{delMoral98}
Pierre Del~Moral and Michel Doisy.
\newblock Maslov idempotent probability calculus. {I}.
\newblock {\em Teor. Veroyatnost. i Primenen.}, 43(4):735--751, 1998.

\bibitem{Falconer90}
Kenneth Falconer.
\newblock {\em Fractal geometry}.
\newblock John Wiley \& Sons Ltd., Chichester, 1990.
\newblock Mathematical foundations and applications.

\bibitem{Gerritse96}
Bart Gerritse.
\newblock Varadhan's theorem for capacities.
\newblock {\em Comment. Math. Univ. Carolin.}, 37(4):667--690, 1996.

\bibitem{Gerritse97}
Gerard Gerritse.
\newblock Lattice-valued semicontinuous functions.
\newblock In {\em Probability and lattices}, volume 110 of {\em CWI Tract},
  pages 93--125. Math. Centrum Centrum Wisk. Inform., Amsterdam, 1997.

\bibitem{Gierz03}
Gerhard Gierz, Karl~Heinrich Hofmann, Klaus Keimel, Jimmie~D. Lawson,
  Michael~W. Mislove, and Dana~S. Scott.
\newblock {\em Continuous lattices and domains}, volume~93 of {\em Encyclopedia
  of Mathematics and its Applications}.
\newblock Cambridge University Press, Cambridge, 2003.

\bibitem{Greco87}
Gabriele~H. Greco.
\newblock Fuzzy integrals and fuzzy measures with their values in complete
  lattices.
\newblock {\em J. Math. Anal. Appl.}, 126(2):594--603, 1987.

\bibitem{Heckmann98b}
Reinhold Heckmann and Michael Huth.
\newblock A duality theory for quantitative semantics.
\newblock In {\em Computer science logic ({A}arhus, 1997)}, volume 1414 of {\em
  Lecture Notes in Comput. Sci.}, pages 255--274. Springer, Berlin, 1998.

\bibitem{Heckmann98}
Reinhold Heckmann and Michael Huth.
\newblock Quantitative semantics, topology, and possibility measures.
\newblock {\em Topology Appl.}, 89(1-2):151--178, 1998.
\newblock Domain theory.

\bibitem{Jonasson98}
Johan Jonasson.
\newblock On positive random objects.
\newblock {\em J. Theoret. Probab.}, 11(1):81--125, 1998.

\bibitem{Kramosil05a}
Ivan Kramosil.
\newblock Generalizations and extensions of lattice-valued possibilistic
  measures, part i.
\newblock Technical Report 952, Institute of Computer Science, Academy of
  Sciences of the Czech Republic, 2005.

\bibitem{Lawson04b}
Jimmie~D. Lawson.
\newblock Idempotent analysis and continuous semilattices.
\newblock {\em Theoret. Comput. Sci.}, 316(1-3):75--87, 2004.

\bibitem{Liu94}
Xue~Cheng Liu and Guang~Quan Zhang.
\newblock Lattice-valued fuzzy measure and lattice-valued fuzzy integral.
\newblock {\em Fuzzy Sets and Systems}, 62(3):319--332, 1994.

\bibitem{Maslov87}
Victor~P. Maslov.
\newblock {\em M\'ethodes op\'eratorielles}.
\newblock \'Editions Mir, Moscow, 1987.
\newblock Translated from the Russian by Djilali Embarek.

\bibitem{Nguyen03}
Hung~T. Nguyen and Bernadette Bouchon-Meunier.
\newblock Random sets and large deviations principle as a foundation for
  possibility measures.
\newblock {\em Soft Comput.}, 8:61--70, 2003.

\bibitem{Nguyen97}
Hung~T. Nguyen, Nhu~T. Nguyen, and Tonghui Wang.
\newblock On capacity functionals in interval probabilities.
\newblock {\em Internat. J. Uncertain. Fuzziness Knowledge-Based Systems},
  5(3):359--377, 1997.

\bibitem{Norberg86}
Tommy Norberg.
\newblock Random capacities and their distributions.
\newblock {\em Probab. Theory Relat. Fields}, 73(2):281--297, 1986.

\bibitem{OBrien91}
George~L. O'Brien and Wim Vervaat.
\newblock Capacities, large deviations and loglog laws.
\newblock In {\em Stable processes and related topics (Ithaca, NY, 1990)},
  volume~25 of {\em Progr. Probab.}, pages 43--83, Boston, MA, 1991.
  Birkh\"auser Boston.

\bibitem{Pap95}
Endre Pap.
\newblock {\em Null-additive set functions}, volume 337 of {\em Mathematics and
  its Applications}.
\newblock Kluwer Academic Publishers Group, Dordrecht, 1995.

\bibitem{Penot82}
Jean-Paul Penot and Michel Th{\'e}ra.
\newblock Semicontinuous mappings in general topology.
\newblock {\em Arch. Math. (Basel)}, 38(2):158--166, 1982.

\bibitem{Poncet07}
Paul Poncet.
\newblock A note on two-valued possibility ({$\sigma$}-maxitive) measures and
  {M}esiar's hypothesis.
\newblock {\em Fuzzy Sets and Systems}, 158(16):1843--1845, 2007.

\bibitem{Poncet09}
Paul Poncet.
\newblock Domain-valued maxitive maps and their representations.
\newblock {\em Preprint}, 2009.

\bibitem{Puhalskii01}
Anatolii~A. Puhalskii.
\newblock {\em Large deviations and idempotent probability}, volume 119 of {\em
  Chapman \& Hall/CRC Monographs and Surveys in Pure and Applied Mathematics}.
\newblock Chapman \& Hall/CRC, Boca Raton, FL, 2001.

\bibitem{Shilkret71}
Niel Shilkret.
\newblock Maxitive measure and integration.
\newblock {\em Nederl. Akad. Wetensch. Proc. Ser. A 74 = Indag. Math.},
  33:109--116, 1971.

\bibitem{Sugeno87}
Michio Sugeno and Toshiaki Murofushi.
\newblock Pseudo-additive measures and integrals.
\newblock {\em J. Math. Anal. Appl.}, 122(1):197--222, 1987.

\bibitem{vanGool92}
Frans van Gool.
\newblock Lower semicontinuous functions with values in a continuous lattice.
\newblock {\em Comment. Math. Univ. Carolin.}, 33(3):505--523, 1992.

\bibitem{Zadeh78}
Lotfi~A. Zadeh.
\newblock Fuzzy sets as a basis for a theory of possibility.
\newblock {\em Fuzzy Sets and Systems}, 1(1):3--28, 1978.

\end{thebibliography}

\def\cprime{$'$} \def\cprime{$'$} \def\cprime{$'$} \def\cprime{$'$}
  \def\ocirc#1{\ifmmode\setbox0=\hbox{$#1$}\dimen0=\ht0 \advance\dimen0
  by1pt\rlap{\hbox to\wd0{\hss\raise\dimen0
  \hbox{\hskip.2em$\scriptscriptstyle\circ$}\hss}}#1\else {\accent"17 #1}\fi}
  \def\ocirc#1{\ifmmode\setbox0=\hbox{$#1$}\dimen0=\ht0 \advance\dimen0
  by1pt\rlap{\hbox to\wd0{\hss\raise\dimen0
  \hbox{\hskip.2em$\scriptscriptstyle\circ$}\hss}}#1\else {\accent"17 #1}\fi}

%\printindex
\end{document}